\numberwithin{equation}{section}
\newtheorem{remark}{Remark}
\newtheorem{assumption}{Assumption}
\newcommand{\bbR}{\mathbb{R}}
\newcommand{\bbC}{\mathbb{C}}
\newcommand{\calN}{\mathcal{N}}
\renewcommand{\tr}{\mathrm{tr}}
\newcommand{\diag}{\mathrm{diag}}
\newcommand{\fmu}{f_{\mu,W}}
\newcommand{\hess}{\mathrm{Hess}\,}
\newcommand{\ie}{i.e.}
\newcommand{\eg}{e.g.}
\newcommand{\norb}{{N_{\mathrm{orb}}}}
\newcommand{\nelec}{{N_{\mathrm{e}}}}
\newcommand{\WTPM}{WTPM}
\newcommand{\fnorm}[1]{\norm{#1}_{\mathrm{F}}}
\newcommand{\conj}{{\mathrm{H}}}
\newcommand{\argmax}{\mathop{\arg\max}}
\newcommand{\argmin}{\mathop{\arg\min}}
\title{Weighted trace-penalty minimization for full configuration
interaction \thanks{Submitted to the editors Jan. 16, 2023. \funding{W.
Gao was partially supported by National Key R\&D Program of China under
Grant No. 2020YFA0711900, 2020YFA0711902 and National Natural Science
Foundation of China under Grant No. 71991471. Y. Li and H. Shen were
partially supported by National Natural Science Foundation of China under
Grant No. 12271109 and Science and Technology Commission of Shanghai
Municipality under Grant No. 22TQ017.}}}
\author{
    Weiguo Gao
    \thanks{School of Mathematical Sciences and School of
    Data Science, Fudan University, Shanghai 200433, China; Shanghai
    Artificial Intelligence Laboratory, Shanghai 200232, China
    (\email{wggao@fudan.edu.cn}).}
    \and
    Yingzhou Li
    \thanks{School of Mathematical Sciences, Fudan University, Shanghai
    200433, China (\email{yingzhouli@fudan.edu.cn}).}
    \and
    Hanxiang Shen
    \thanks{Shanghai Center for Mathematical Science, Fudan University,
    Shanghai 200438, China (\email{hxshen19@fudan.edu.cn}).}
}
\begin{document}

\maketitle

\begin{abstract}
    A novel unconstrained optimization model named weighted trace-penalty
    minimization (\WTPM{}) is proposed to address the extreme eigenvalue
    problem arising from the Full Configuration Interaction (FCI) method.
    Theoretical analysis shows that the global minimizers of the \WTPM{} objective
    function are the desired eigenvectors, rather than the eigenspace.
    Analyzing the condition number of the Hessian operator in detail
    contributes to the determination of a near-optimal weight matrix.
    With the sparse feature of FCI matrices in mind, the coordinate
    descent (CD) method is adapted to \WTPM{} and results in \WTPM{}-CD
    method.  The reduction of computational and storage costs in each
    iteration shows the efficiency of the proposed algorithm.  Finally,
    the numerical experiments demonstrate the capability to address
    large-scale FCI matrices.
\end{abstract}

\begin{keywords}
Eigensolver, Weighted Trace-Penalty Minimization, Full Configuration
Interaction, Coordinate Descent Method
\end{keywords}

\begin{MSCcodes}
    65F15
\end{MSCcodes}

\section{Introduction}

The time-independent, non-relativistic Schr{\"o}dinger equation is a
linear Hermitian extreme eigenvalue problem, 
\begin{equation} \label{eq:eigenproblem}
    H \ket{\psi_i} = E_i\ket{\psi_i}, \quad i=1,2,\dots,p, 
\end{equation}
where $H$ is a Hamiltonian operator, $(E_i,\ket{\psi_i})$ denotes the
ground-state and lowest few excited-state energies and their corresponding
wavefunctions, and $p$ is the number of desired eigenvalues. Efficiently
solving the Schr{\"o}dinger equation plays a fundamental role in the field
of electronic structure calculation. The problem is
of high-dimensionality, \ie, $\ket{\psi} = \psi(x_1, \dots, x_\nelec)$ for
$x_i \in \bbR^3$ being the position of electrons and $\nelec$ being the
number of active electrons in the system. Further, identical electron
wavefunctions admit an antisymmetry property, which corresponds to the
Pauli exclusion principle. Full configuration interaction~(FCI) is a
variational method that solves~\eqref{eq:eigenproblem} numerically exactly
within the space of all Slater determinants~\cite{knowles1984new,
knowles1989determinant, sherrill1999configuration}. By the nature of
Slater determinants, the antisymmetry property is encoded into the
many-body basis functions. Nevertheless, the high-dimensionality feature
still leads to extremely large Hamiltonian matrices. More precisely, the
size of Hamiltonian matrices grows factorially with respect to the number
of electrons and orbitals in the system. For a system with $\norb$
orbitals and $\nelec$ electrons, the number of Slater determinants is
$O\Big(\tbinom{\norb}{\nelec} \Big)$~\cite{knowles1989unlimited}.
A single $\ch{H2O}$ molecule with cc-pVDZ basis~($24$ orbitals) and 10
active electrons leads to a $4.53 \cdot 10^8 \times 4.53 \cdot 10^8$
Hamiltonian matrix~\cite{wangCoordinateDescentFull2019a}. The antisymmetry
property of the problems leads to the notorious sign problem.

In electronic structure calculation, widely used eigensolvers can be
classified into two groups: Krylov subspace methods and optimization
methods. Krylov subspace methods include Chebyshev-Davidson
algorithm~\cite{zhouChebyshevDavidsonAlgorithm2007}, locally optimal block
preconditioned conjugate gradient
method~(LOBPCG)~\cite{knyazev2001toward}, block Krylov-Schur
algorithm~\cite{zhou2008block}, projected preconditioned conjugate
gradient algorithm
(PPCG)~\cite{vecharynskiProjectedPreconditionedConjugate2015}, etc. In all
these methods, an explicit orthogonalization step is carried out every few
iterations, which costs a significant amount of computational resource
throughout the algorithm. The other group, optimization methods,
transforms the eigenvalue problem into an optimization problem with or
without the orthogonality constraint. The orthogonality constrained
optimization problem is also known as the Stiefel manifold optimization.
The corresponding optimization
algorithms~\cite{absilOptimizationAlgorithmsMatrix2008,
gaoNewFirstOrderAlgorithmic2018, huBriefIntroductionManifold2020,
sameh2000trace} require either a projection or a retraction step to keep
the iteration on the manifold. The computational costs for these
projections and retractions remain the same as that of the
orthogonalization. Solving the eigenvalue problem via an unconstrained
optimization is popular recently, especially when the large-scale
eigenvalue problems are considered. Such methods include symmetric
low-rank product model~\cite{liCoordinateWiseDescentMethods2019,
liuEfficientGaussNewton2015, wangCoordinateDescentFull2019a}, orbital
minimization method~(OMM)~\cite{corsettiOrbitalMinimizationMethod2014,
luOrbitalMinimizationMethod2017}, trace-penalty
minimization~\cite{wen2016trace}, etc. All these methods converge to the
invariant subspace corresponding to the smallest eigenvalues, and require
a single Rayleigh-Ritz procedure to extract eigenvectors from the
invariant subspace.

However, due to the large-scale matrix
size, none of the eigensolvers mentioned above can be applied directly to
the FCI eigenvalue problem. Almost all of them run into the memory
bottleneck. Many non-standard eigensolvers are designed particularly for
FCI ground-state computation. Density matrix renormalization
group (DMRG)~\cite{chan2011thedensity, olivares2015theabinitio,
white1999ab} approximates the high-dimensional wavefunctions by a tensor
train. The underlying eigenvalue problem is solved by the vanilla power
method. FCI quantum Monte Carlo
(FCIQMC)~\cite{boothExactDescriptionElectronic2013,
boothFermionMonteCarlo2009, luFullConfigurationInteraction2020} adopts the
quantum Monte Carlo method to overcome the sign problem and the curse of
dimensionality. Related
methods~\cite{clelandCommunicationsSurvivalFittest2010,
petruzieloSemistochasticProjectorMonte2012} in this family further adjust
the variance-bias trade-off to obtain efficient algorithms.  Selected
configuration
interaction~(SCI)~\cite{holmesHeatBathConfigurationInteraction2016,
iterative1973huron, schriberAdaptiveConfigurationInteraction2017,
tubmanDeterministicAlternativeFull2016} employs perturbation analysis of
eigenvalue problems and selects important configurations accordingly. Then
the eigenvalue problem of the selected principal submatrix is solved via
traditional eigensolvers. Various methods in this family differ from each
other in the computationally efficient approximations to the perturbation
result. Coordinate descent
FCI~(CDFCI)~\cite{liCoordinateWiseDescentMethods2019,
wangCoordinateDescentFull2019a} applies the coordinate descent method on
the symmetric low-rank product model to select important configurations
and to update the coefficients. A carefully designed compression scheme is
incorporated to overcome the memory bottleneck.

Furthermore, the non-standard eigensolvers mentioned above can be adapted
to the low-lying excited-states computation.
DMRG~\cite{Baiardi_2017_VibrationalDensityMatrix} and
FCIQMC~\cite{bluntExcitedstateApproachFull2015} adopt the deflation idea
and compute excited-states one-by-one.
SCI~\cite{schriberAdaptiveConfigurationInteraction2017,
tubmanDeterministicAlternativeFull2016} needs to select many more
configurations to capture the important configurations for excited states.
CDFCI~\cite{Li_2022_CDFCIforExcitedStates} could be naturally extended and
converge to the invariant subspace formed by the ground-state and
excited-states. The extra rotation matrix within the invariant subspace
leads to less sparse iteration variables and increases the memory cost.
Specifically targeting the low-lying excited-states computation, the
triangularized orthogonalization free
method~(TriOFM)~\cite{gaoTriangularizedOrthogonalizationfreeMethod2020,
gaoGlobalConvergenceTriangularized2021} proposes a triangularized
iterative scheme converging to eigenvectors directly without any
projection or Rayleigh-Ritz step.

In this paper, inspired by the weighted subspace-search variational quantum
eigensolver~\cite{nakanishi2019subspace} from quantum computing and the
trace-penalty model~\cite{wen2016trace}, we propose an unconstrained
optimization model called weighted trace-penalty minimization (\WTPM{}),
\begin{equation} \label{eq:WeightedTracePenalty}
    \min_{X \in \bbR^{n\times p}} \fmu(X) = \frac{1}{2} \tr(X^\top AX)
    + \frac{\mu}{4}\fnorm{X^\top X-W}^2,
\end{equation}
where $A \in \bbR^{n \times n}$ is a symmetric matrix, $\mu > 0$ is the
penalty parameter and $W$ is a diagonal weight matrix with distinct
diagonal entries. 
Our analysis shows that
\eqref{eq:WeightedTracePenalty} does not have spurious local minima, and
the $2^p$ isolated global minima of \eqref{eq:WeightedTracePenalty} are
scaled eigenvectors corresponding to the smallest $p$ eigenvalues of $A$.
Moreover, we calculate the condition number of the $W$-dependent Hessian
matrix of $\fmu(X)$ at global minima, which leads to the local convergence
rate for the first-order method.  A near-optimal weight matrix $W$ is,
then, derived to achieve a fast local convergence rate.

With the size of FCI problem size in mind, we focus on first-order methods
to address \eqref{eq:WeightedTracePenalty}. One choice is the gradient
descent (GD) method with Barzilai-Borwein (BB)
stepsize~\cite{barzilai1988two}. A more desirable choice is the coordinate
descent (CD) method, which reveals the sparsity in FCI problems
efficiently~\cite{wangCoordinateDescentFull2019a}. Hence, we tailor the CD
method for \eqref{eq:WeightedTracePenalty} and obtain an efficient
eigensolver for the FCI eigenvalue problem. Global convergence of the
proposed eigensolver can be proved in the same way as that of
CDFCI~\cite{liCoordinateWiseDescentMethods2019}, whereas the local linear
convergence is obtained with a rate related to the Hessian operator. We
emphasize that both GD and CD methods for \eqref{eq:WeightedTracePenalty}
converge to the scaled eigenvectors directly. Hence the expensive and
parallel inefficient Rayleigh-Ritz step is omitted entirely from both
methods.

Numerically, we test and compare the performance of the original
trace-penalty model~\cite{wen2016trace} and our \WTPM{} on FCI matrices
from practice. The numerical results show that both models with
first-order methods converge to desired global minima. Adding the extra
weight matrix in \WTPM{} does not destroy the efficiency of the original
trace-penalty model. For the FCI matrices, \WTPM{}-CD method converges in
far less cost of flops. \WTPM{}-CD finds the sparse representations of the
eigenvectors corresponding to the smallest few eigenvalues, whereas the
trace-penalty model requires an extra Rayleigh-Ritz step.

The rest of this paper is organized as follows. In
Section~\ref{sec:analysis}, we give a theoretical analysis on
\eqref{eq:WeightedTracePenalty} with a focus on the global minima and the
condition number of the Hessian operator. Section~\ref{sec:algorithm}
proposes algorithms for \eqref{eq:WeightedTracePenalty} and analyzes their
performance and complexity. Section~\ref{sec:numericalresult} reports the
numerical results showing the efficiency of our method. Finally, we
conclude this paper in Section~\ref{sec:conclusion} with some discussion
on future work.

\section{Model Analysis}
\label{sec:analysis}

This section focuses on the analysis of the energy landscape of the
weighted trace-penalty minimization model. We will first analyze the
stationary points and Hessian operator of \eqref{eq:WeightedTracePenalty} in
Section~\ref{sec:stationarypoints} and Section~\ref{sec:hessianmatrices}
respectively. Based on the condition number of the Hessian operator at the
global minimum, a near-optimal choice of the weight matrix $W$ is
discussed in Section~\ref{sec:optimalW} to achieve a near-optimal local
convergence rate for first-order methods. Finally in
Section~\ref{sec:hermitianmatrices}, we discuss the extensions of all
analysis results to Hermitian matrices.

We consider a real symmetric matrix $A$ of size $n\times n$. The
eigenvalue decomposition of $A$ is denoted as,
\begin{equation} \label{eq:ConditionOfA}
    A = V \Lambda V^\top,
\end{equation}
where $V = (v_1, v_2, \dots, v_n) \in \bbR^{n\times n}$ and $\Lambda =
\diag(\lambda_1, \lambda_2, \dots, \lambda_n)$ such that
\begin{equation} \label{eq:SpectrumOfA}
\lambda_1 < \lambda_2 < \cdots < \lambda_p
< \lambda_{p+1} \leqslant \cdots \leqslant \lambda_n.
\end{equation}
A pair $(\lambda_i, v_i)$ is an eigenpair of $A$. Throughout this paper,
we aim to compute the smallest $p$ eigenpairs of $A$ via \WTPM{}. The real
weight matrix in \eqref{eq:WeightedTracePenalty} satisfies the following
assumption.
\begin{assumption}
    \label{assump:ConditionOfW}
    The weight matrix is diagonal, $W = \diag(w_1, w_2,
    \dots, w_p)$ such that
    \begin{equation} \label{eq:ConditionOfW}
        w_1 > w_2 > \cdots > w_p > \frac{\lambda_p}{\mu}.
    \end{equation}
\end{assumption}

\subsection{Stationary Points}
\label{sec:stationarypoints}

Stationary points of \eqref{eq:WeightedTracePenalty} satisfy the
first-order necessary optimal condition,
\begin{equation} \label{eq:FirstOrderCondition}
    \nabla \fmu(X) = AX + \mu X(X^\top X-W) = 0.
\end{equation}
Left multiplying both sides of \eqref{eq:FirstOrderCondition} by the
transpose of $X$, we obtain, 
\begin{equation} \label{eq:DiagonalXX}
    X^\top AX = \mu X^\top X(W-X^\top X),
\end{equation}
where the left-hand side is symmetric. The symmetry property
of~\eqref{eq:DiagonalXX} leads to the fact that $X^\top XW = WX^\top X$.
Since $W$ is a diagonal matrix and all entries are distinct as in
\eqref{eq:ConditionOfW}, the equation $X^\top XW = WX^\top X$ implies that
$X^\top X$ is diagonal, \ie, columns of stationary point $X$ are either
zero or mutually orthogonal. In Theorem~\ref{thm:StationaryPoint}, we give
the explicit form of the stationary points of
\eqref{eq:WeightedTracePenalty}, where each
column of $X$ is either an eigenvector of $A$ or the zero vector.

\begin{theorem} [Stationary Points]
    \label{thm:StationaryPoint}
    Assume $A$ and $W$ satisfy \eqref{eq:ConditionOfA} and
    \eqref{eq:ConditionOfW} respectively. Any stationary point
    $\widehat{X}$ of \eqref{eq:WeightedTracePenalty} has the form
    \begin{equation}
        \widehat{X} = \widehat{U}_p \widehat{S}_p,
    \end{equation}
    where $\widehat{U}_p = (\hat{u}_1, \hat{u}_2, \dots, \hat{u}_p)$ and
    $\widehat{S}_p = \diag(\hat{s}_1, \hat{s}_2, \dots, \hat{s}_p)$ such
    that 
    \begin{equation}\label{eq:PropertyStationaryPoint}
        \begin{split}
            A \hat{u}_i = \sigma_i \hat{u}_i,
            \quad \hat{u}_i^\top \hat{u}_j = \delta_{ij} \text{ and,} \\
            \hat{s}_i \in \left\{0, \sqrt{w_i - \frac{\sigma_i}{\mu}}
            \right\}.
        \end{split}
    \end{equation}
\end{theorem}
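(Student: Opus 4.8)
The plan is to push the reasoning that precedes the theorem one step further. That discussion already shows, from the symmetry of~\eqref{eq:DiagonalXX} and the distinctness of the weights in~\eqref{eq:ConditionOfW}, that $\widehat{X}^\top\widehat{X}$ is diagonal; write $\widehat{X}=(x_1,x_2,\dots,x_p)$ and $\widehat{X}^\top\widehat{X}=\diag(d_1,d_2,\dots,d_p)$ with $d_i=\norm{x_i}^2\geqslant 0$. Then $\widehat{X}^\top\widehat{X}-W=\diag(d_1-w_1,\dots,d_p-w_p)$ is again diagonal, so the $i$-th column of the first-order condition~\eqref{eq:FirstOrderCondition} decouples into the scalar-coefficient eigenvector equation $Ax_i=\mu(w_i-d_i)\,x_i$. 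This decoupling is the crux of the proof; everything after it is bookkeeping on a single column.

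For each $i$ I would then distinguish two cases. If $x_i=0$, set $\hat{s}_i=0$. If $x_i\neq 0$, the decoupled equation says $x_i$ is an eigenvector of $A$; put $\hat{u}_i=x_i/\norm{x_i}$ and $\sigma_i=\mu(w_i-d_i)$, so that $A\hat{u}_i=\sigma_i\hat{u}_i$, and solve $d_i=w_i-\sigma_i/\mu$ for the squared length to obtain $\hat{s}_i=\norm{x_i}=\sqrt{d_i}=\sqrt{w_i-\sigma_i/\mu}$. Since $d_i>0$ on a nonzero column, this forces $\sigma_i<\mu w_i$, so the square root is real and positive; equivalently, the ``nonzero'' branch of~\eqref{eq:PropertyStationaryPoint} is available only for eigenvalues strictly below $\mu w_i$, while for any larger eigenvalue only $\hat{s}_i=0$ remains. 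In either case $x_i=\hat{s}_i\hat{u}_i$ with $\hat{s}_i$ of the stated form.

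It remains to assemble $\widehat{U}_p$ and check orthonormality. Diagonality of $\widehat{X}^\top\widehat{X}$ makes the nonzero columns pairwise orthogonal, and by construction they are mutually orthogonal unit eigenvectors; for the indices with $\hat{s}_i=0$ the factor $\hat{u}_i$ does not affect $\widehat{U}_p\widehat{S}_p$, so I may choose it freely. I would extend the orthonormal family already chosen to a full orthonormal eigenbasis of $\bbR^n$---legitimate because the orthogonal complement of a set of eigenvectors of the symmetric matrix $A$ is $A$-invariant and hence itself spanned by eigenvectors, and $p\leqslant n$---and assign the remaining $\hat{u}_i$ distinct members of this basis. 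Setting $\widehat{U}_p=(\hat{u}_1,\dots,\hat{u}_p)$ and $\widehat{S}_p=\diag(\hat{s}_1,\dots,\hat{s}_p)$ then yields~\eqref{eq:PropertyStationaryPoint} and $\widehat{X}=\widehat{U}_p\widehat{S}_p$. I do not anticipate a genuine obstacle: the only points deserving a line of care are recording the inequality $\sigma_i<\mu w_i$ that legitimizes the square root on a nonzero column, and the eigenbasis-completion step used to define $\hat{u}_i$ on the zero columns.
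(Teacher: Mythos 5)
Your proposal is correct and follows essentially the same route as the paper's proof: both use the diagonality of $\widehat{X}^\top\widehat{X}$ to decouple the first-order condition column by column, identify each nonzero column as a scaled eigenvector with $\hat{s}_i=\sqrt{w_i-\sigma_i/\mu}$, and complete the zero columns with arbitrary orthonormal eigenvectors. Your added remark that $d_i>0$ forces $\sigma_i<\mu w_i$ (so the radicand is positive) is a small but welcome clarification that the paper leaves implicit.
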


\begin{proof}
As we discussed above, $\widehat{X}^\top \widehat{X}$ is a diagonal
matrix. Columns of \eqref{eq:FirstOrderCondition}, then, can be
represented as
\begin{equation}
    \label{eq:ReformulateFirstOrderCondition}
    A \hat{x}_i = d_i \hat{x}_i,\quad i = 1, 2, \dots, p,
\end{equation}
where $\hat{x}_i$ is the $i$-th column of $\widehat{X}$ and $d_i = \mu
(w_i - \hat{x}^\top_i \hat{x}_i)$. Vector $\hat{x}_i$ is either a zero
vector or an eigenvector of $A$. When $\hat{x}_i$ is a zero vector, it can
be represented as the form in the theorem for $\hat{s}_i = 0$. When
$\hat{x}_i$ is not a zero vector, we denote $\hat{x}_i = \hat{u}_i
\hat{s}_i$ for $\hat{u}_i$ being a unit length eigenvector of $A$
associated with eigenvalue $\sigma_i$ and $\hat{s}_i$ being a positive
scalar. Then $\hat{s}_i$ satisfies
\begin{equation*}
    \sigma_i = d_i = \mu (w_i - \hat{s}_i^2) \Rightarrow
    \hat{s}_i = \sqrt{w_i - \frac{\sigma_i}{\mu}}.
\end{equation*}
Recall that $\widehat{X}^\top \widehat{X}$ is a diagonal matrix. For
nonzero $\hat{s}_i$ and $\hat{s}_j$, it is required that $\hat{u}_i^\top
\hat{u}_j = \delta_{ij}$. When $\hat{s}_i$ is zero, we could always find
an extra orthogonal eigenvector of $A$, such that $\hat{u}_i^\top
\hat{u}_j = \delta_{ij}$ still holds. Therefore, we proved that the
stationary points of \eqref{eq:WeightedTracePenalty} must admit the form
in the theorem and any point that admits the form
in the theorem is a stationary point.
\end{proof}
 
Furthermore, we will distinguish the local and global minima from saddle
points. Interestingly, it can be shown that the columns of global
minimizer $X^*$ are eigenvectors of $A$ associated with the $p$ smallest
eigenvalues.

\begin{theorem} [Global Minima]
    \label{thm:GlobalMin}
    Assume $A$ and $W$ satisfy \eqref{eq:ConditionOfA} and
    \eqref{eq:ConditionOfW}. Any global
    minimizer $X^*$ of \eqref{eq:WeightedTracePenalty} has the form
    \begin{equation}\label{eq:GlobalMin}
        X^* = V_p S_p,
    \end{equation}
    where $V_p = (v_1,v_2,\dots,v_p)$ and $S_p = \diag(\pm s_1,\pm
    s_2,\dots,\pm s_p)$ such that
    \begin{equation*}
        s_i = \sqrt{w_i - \frac{\lambda_i}{\mu}} . 
    \end{equation*}
\end{theorem}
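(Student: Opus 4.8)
The plan is to exploit Theorem~\ref{thm:StationaryPoint}: since a global minimizer is in particular a stationary point, it already has the form $\widehat{X}=\widehat{U}_p\widehat{S}_p$ with orthonormal eigenvector columns $\hat u_i$ (eigenvalue $\sigma_i$) and $\hat s_i\in\{0,\sqrt{w_i-\sigma_i/\mu}\}$, so the problem reduces to deciding which of these stationary points minimize $\fmu$. First I would evaluate $\fmu$ at such a point: because $\widehat{X}^\top\widehat{X}$ is diagonal, both $\tr(\widehat{X}^\top A\widehat{X})$ and $\fnorm{\widehat{X}^\top\widehat{X}-W}^2$ split columnwise, and a short computation gives $\fmu(\widehat{X})=\sum_{i=1}^p c_i$ with $c_i=\frac{\mu w_i^2}{4}$ when $\hat s_i=0$ and $c_i=g_i(\sigma_i):=\frac12\sigma_i w_i-\frac{\sigma_i^2}{4\mu}$ otherwise. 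The key facts I would record about $g_i$ are that it is a downward parabola with vertex at $\sigma=\mu w_i$ and maximal value $\frac{\mu w_i^2}{4}$ — which is exactly the ``zero column'' cost — so $g_i$ is strictly increasing on $(-\infty,\mu w_i)$; and that any \emph{nonzero} column of a \emph{real} stationary point necessarily has $\sigma_i<\mu w_i$, so that every column cost that will ever be compared sits on the increasing branch of its $g_i$.

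With this in hand the argument is a sequence of exchange (swap) steps applied to a global minimizer $X^*$. (i) No column is zero: if $\hat s_{i_0}=0$, fewer than $p$ of the simple eigenvalues $\lambda_1,\dots,\lambda_p$ are carried by the remaining columns, so some $\lambda_m$ ($m\le p$) is unused and $v_m$ is orthogonal to every other column; replacing column $i_0$ by $v_m\sqrt{w_{i_0}-\lambda_m/\mu}$ keeps the columns orthogonal and changes the objective by $g_{i_0}(\lambda_m)-\frac{\mu w_{i_0}^2}{4}<0$ since $\lambda_m\le\lambda_p<\mu w_p\le\mu w_{i_0}$, contradicting minimality. (ii) The eigenvalues used are exactly $\{\lambda_1,\dots,\lambda_p\}$: each $\lambda_i$ ($i\le p$) is simple and thus carried by at most one column, so if some $\lambda_m$ is missing, pigeonhole forces a column with eigenvalue $\sigma_{i_0}\notin\{\lambda_1,\dots,\lambda_p\}$, hence $\sigma_{i_0}>\lambda_p\ge\lambda_m$; replacing $\hat u_{i_0}$ by $v_m$ (again orthogonal to the untouched columns) changes the objective by $g_{i_0}(\lambda_m)-g_{i_0}(\sigma_{i_0})<0$ because $\lambda_m<\sigma_{i_0}<\mu w_{i_0}$ and $g_{i_0}$ is increasing there. (iii) The pairing is forced: if column $j$ carries $\lambda_{\pi(j)}$ for a permutation $\pi\neq\mathrm{id}$, any inversion $j_1<j_2$ with $\pi(j_1)>\pi(j_2)$ is removed by swapping the two eigenvectors, changing the objective by $\tfrac12(w_{j_1}-w_{j_2})(\lambda_{\pi(j_2)}-\lambda_{\pi(j_1)})<0$. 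Hence column $j$ carries $\lambda_j$; simplicity of $\lambda_j$ gives $\hat u_j=\pm v_j$ and $\hat s_j=\sqrt{w_j-\lambda_j/\mu}=s_j$, so $X^*=V_pS_p$ with $S_p=\diag(\pm s_1,\dots,\pm s_p)$. Finally, coercivity of $\fmu$ (the quartic penalty dominates as $\fnorm{X}\to\infty$) guarantees existence of a global minimizer, and since all $2^p$ sign patterns give the same objective value, they are precisely the global minimizers.

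The step I expect to be the main obstacle is handling the apparent tension in $g_i(\sigma)=\tfrac12\sigma w_i-\tfrac{\sigma^2}{4\mu}$ between the linear term, which favors small eigenvalues, and the concave quadratic term, which favors large ones. A tempting route — sort the eigenvalues of a candidate stationary point and compare with $\{\lambda_1,\dots,\lambda_p\}$ via the rearrangement inequality — breaks down, because after resorting one loses the per-slot feasibility bound $\sigma<\mu w_i$, and for a large eigenvalue $g_i(\sigma)$ can fall below $g_i(\lambda_i)$. The exchange argument circumvents this: every swap above keeps each eigenvalue in its original column slot (or exchanges the slots of two columns), so every comparison remains on the increasing branch of the relevant $g_i$, where ``a smaller eigenvalue is strictly better'' holds unconditionally. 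The only remaining technicality is to check in each swap that the inserted eigenvector is orthogonal to the untouched columns, which is immediate since $\lambda_1,\dots,\lambda_p$ are simple and, by construction, the target eigenvalue is not carried by any other column.
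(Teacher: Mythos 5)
Your proof is correct and follows essentially the same route as the paper: invoke Theorem~\ref{thm:StationaryPoint} to reduce to the stationary-point form, express $\fmu(\widehat{X})$ as a sum of columnwise costs (your $g_i(\sigma)=\tfrac12\sigma w_i-\tfrac{\sigma^2}{4\mu}$ is just $\tfrac{\mu w_i^2}{4}-\tfrac{(\sigma-\mu w_i)^2}{4\mu}$, the expression in the paper), rule out zero columns and eigenvalues outside $\{\lambda_1,\dots,\lambda_p\}$ by exchange, and finally fix the pairing---you remove inversions by hand where the paper cites the Rearrangement Inequality, but that is the same argument. The worry in your closing paragraph is unfounded here: once step (ii) confines all used eigenvalues to $\{\lambda_1,\dots,\lambda_p\}$, every $\lambda_m<\mu w_p\leqslant\mu w_i$ so the per-slot feasibility bound holds for every assignment, and the rearrangement step the paper uses is sound.
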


\begin{proof}

By Theorem~\ref{thm:StationaryPoint}, the stationary point has the form
$\widehat{X} = \widehat{U}_p \widehat{S}_p$. Substituting it into the
objective function leads to,
\begin{equation*}
    2 \fmu(\widehat{X}) = \sum_{i=1}^p \bigl[\hat{s}_i^2 \sigma_i
    + \frac{\mu}{2} (\hat{s}_i^2 - w_i)^2\bigr]
    = \sum_{i=1}^p \frac{\mu}{2}w_i^2 - \sum_{i \in \{\hat{s}_i \neq 0\}}
    \frac{(\sigma_i - \mu w_i)^2}{2\mu},
\end{equation*}
where $\sigma_i$ is one of the eigenvalues of $A$ and the second equality
is due to the expression of $\hat{s}_i$. If some $\hat{s}_i
= 0$, then we have $\sigma_i - \mu w_i = 0$. Under
Assumption~\ref{assump:ConditionOfW}, there are at least $p$ eigenvalues
to make $-\tfrac{(\lambda - \mu w_i)^2}{2\mu} < 0$ and at least one of
them is not in $\{\sigma_i\}_{i=1}^p$. Replacing $\sigma_i$ by one of the
unselected eigenvalue with $-\tfrac{(\lambda - \mu w_i)^2}{2\mu} < 0$
would lead to a smaller objective function value. Hence if $\hat{s}_i = 0$
for some $i$, the stationary point is not a global minimizer. Notice that
if $\hat{s}_i \neq 0$ then $\sigma_i < \mu w_i$. We only need to show that
$\sigma_i = \lambda_i$.

First, we claim that $\sigma_1,\sigma_2,\dotsc,\sigma_p$ must be a
permutation of $\lambda_1,\lambda_2,\dotsc,\lambda_p$. If not, for example
there exists $\sigma_{i_0}$ such that $\mu w_{i_0} > \sigma_{i_0} >
\lambda_p$. There also exists $\lambda_j$ not used such that
$\lambda_j \leqslant
\lambda_p$. If $\sigma_{i_0} = \lambda_j$ instead, the value of
$\fmu(\widehat{X})$ will decrease. This contradicts with the minimal
property so our claim holds.

Next, since there are at least $p$ eigenvalues smaller than any $\mu w_i$
due to Assumption~\ref{assump:ConditionOfW}, the
optimization~\eqref{eq:WeightedTracePenalty} changes into the form as
\begin{equation}\label{eq:StationaryPointFuncVal}
    \max_{\substack{\sigma_i \in \{\lambda_1,\dotsc,\lambda_p\}\\
    \sigma_i \neq \sigma_j\;\text{if }i\neq j}}
    \sum_{i=1}^p (\sigma_i-\mu w_i)^2
    =\sum_{i=1}^p -2\mu\sigma_i w_i + \text{const}.
\end{equation}
According to Rearrangement Inequality~\cite{Hardy_2001_Inequalities}, we
can conclude that the minimum value is reached if $\sigma_i = \lambda_i$,
\ie 
\[ X^* = V_p S_p.\]
\end{proof}

Theorem~\ref{thm:GlobalMin} presents an interesting result that any global
minimizer of the \WTPM~\eqref{eq:WeightedTracePenalty}
is composed of desired eigenvectors directly rather
than the underlying invariant subspace. 

Another result is that there are no spurious local minima, \ie, the rest
stationary points are all strict saddle points. To show this, we first
introduce the Hessian operator of $\fmu$,
\begin{equation} \label{eq:hessian}
    \hess \fmu(X)[Z] = A Z + \mu (Z X^\top X + X Z^\top X + X X^\top Z
    - Z W),
\end{equation}
where $Z \in \bbR^{n \times p}$ is an arbitrary matrix. A stationary point
$\hat{X}$ is called a strict saddle point of $\fmu$ if and only if the
Hessian operator $\hess \fmu(\hat{X})$ has negative
eigenvalues.~\footnote{Here the strict saddle point includes maximizer of
the problem, \ie, a stationary point with negative semi-definite Hessian
operator.} 

In Theorem~\ref{thm:NoLocalMinima}, we prove that
\eqref{eq:WeightedTracePenalty} does not have any spurious local minimum.

\begin{theorem}
    \label{thm:NoLocalMinima}
    Assume $A$ and $W$ satisfy \eqref{eq:ConditionOfA} and
    \eqref{eq:ConditionOfW}. There are no local minima other than the
    global minimizers $X^*$.
\end{theorem}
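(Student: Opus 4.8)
The plan is to certify that every stationary point of \eqref{eq:WeightedTracePenalty} that is not one of the global minimizers $X^{*}=V_{p}S_{p}$ from Theorem~\ref{thm:GlobalMin} is a strict saddle, by exhibiting an explicit perturbation $Z\in\bbR^{n\times p}$ with $\langle Z,\hess\fmu(\widehat{X})[Z]\rangle<0$ in the Frobenius inner product. Together with the classification in Theorem~\ref{thm:StationaryPoint}, this gives the statement, since a local minimizer is necessarily a stationary point at which the Hessian operator is positive semidefinite. I write a stationary point as $\widehat{X}=\widehat{U}_{p}\widehat{S}_{p}$ as in Theorem~\ref{thm:StationaryPoint}, and set $d_{k}=\mu(w_{k}-\hat{s}_{k}^{2})$, the eigenvalue ``assigned'' to column $k$, so $d_{k}=\sigma_{k}$ when $\hat{x}_{k}\neq 0$ and $d_{k}=\mu w_{k}$ when $\hat{x}_{k}=0$. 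By Theorem~\ref{thm:GlobalMin}, $\widehat{X}$ fails to be a global minimizer precisely when some column is zero, or some $\sigma_{k}>\lambda_{p}$, or all columns are nonzero and $(\sigma_{1},\dots,\sigma_{p})$ is a non-identity permutation of $(\lambda_{1},\dots,\lambda_{p})$.

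The computational engine is a simplified expression for the Hessian quadratic form on perturbations supported on one or two columns. Since the columns of $\widehat{X}$ are mutually orthogonal eigenvectors (or zero), substituting \eqref{eq:hessian} into $\langle Z,\hess\fmu(\widehat{X})[Z]\rangle$ and using $A\hat{u}_{i}=\sigma_{i}\hat{u}_{i}$ with the stationarity relation $\mu\hat{s}_{i}^{2}=\mu w_{i}-\sigma_{i}$ makes almost all coupling terms disappear. For $Z$ supported on a single column $k$ whose entry is a unit eigenvector $v_{j}$ orthogonal to every $\hat{u}_{\ell}$, the terms $XZ^{\top}X$ and $XX^{\top}Z$ vanish and the form reduces to $\lambda_{j}-d_{k}$. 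This settles the first two cases: if $\widehat{X}$ has a zero column, or uses some $\sigma_{k}>\lambda_{p}$, then at most $p-1$ of the simple eigenvalues $\lambda_{1},\dots,\lambda_{p}$ are used, so there is an unused $\lambda_{j}\leqslant\lambda_{p}$ whose $v_{j}$ is orthogonal to all used $\hat{u}_{\ell}$ (the first $p$ eigenvalues being simple); since $\lambda_{p}<\mu w_{p}\leqslant\mu w_{k}$ by Assumption~\ref{assump:ConditionOfW} and $\lambda_{p}<\sigma_{k}$ in the second subcase, we get $\lambda_{j}-d_{k}<0$, so $\widehat{X}$ is a strict saddle.

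The remaining case, all columns nonzero with $(\sigma_{1},\dots,\sigma_{p})$ a non-identity permutation of $(\lambda_{1},\dots,\lambda_{p})$, is the delicate one, and it is where the distinct weights do their work. I pick an inversion $i<j$ with $\sigma_{i}>\sigma_{j}$ and take the two-column rotation $Z$ with $z_{i}=a\hat{u}_{j}$, $z_{j}=b\hat{u}_{i}$, and $z_{\ell}=0$ otherwise. The same simplification turns $\langle Z,\hess\fmu(\widehat{X})[Z]\rangle$ into a $2\times 2$ quadratic form in $(a,b)$ with positive diagonal entries $\mu w_{j}-\sigma_{i}$ and $\mu w_{i}-\sigma_{j}$, and off-diagonal entry $\sqrt{(\mu w_{i}-\sigma_{i})(\mu w_{j}-\sigma_{j})}$ arising from the cross term $2ab\,\mu\hat{s}_{i}\hat{s}_{j}$ of $XZ^{\top}X$. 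Its determinant should simplify to $\mu(\sigma_{j}-\sigma_{i})(w_{i}-w_{j})$, which is negative since $\sigma_{i}>\sigma_{j}$ and $w_{i}>w_{j}$; an indefinite $2\times 2$ form has a direction of negative value, so $\widehat{X}$ is again a strict saddle, which completes the argument.

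I expect the main obstacle to be the algebra of this last step: one must carefully isolate the $XZ^{\top}X$ contribution that produces the off-diagonal entry, and then combine the positivity $\mu w_{i}-\sigma_{i}>0$ (needed for $\hat{s}_{i}$ to be real) with the inequality $\mu w_{j}>\lambda_{p}\geqslant\sigma_{i}$ to see that the degeneracy present in the unweighted trace-penalty model is broken exactly by the gap $w_{i}-w_{j}$. The single-column cases, once the reduced Hessian form is written down, are routine.
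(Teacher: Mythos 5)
Your proposal is correct, and it actually covers one case that the paper's own proof handles only implicitly (arguably glosses over). Both you and the paper use the same basic strategy — classify the non-global stationary points via Theorem~\ref{thm:StationaryPoint}, then exhibit a direction $Z$ with $\tr(Z^\top\hess\fmu(\widehat{X})[Z])<0$ — but the case split is different. The paper splits into ``full column rank'' and ``rank-deficient,'' and in the full-rank case asserts that there is some $v_i\in\{v_1,\dots,v_p\}$ with $v_i^\top\widehat{U}_p=0$ and $\lambda_i<\sigma_j$ for some $j$. This is true when some $\sigma_k>\lambda_p$ (then at most $p-1$ of $\lambda_1,\dots,\lambda_p$ are used), but it is \emph{false} when $(\sigma_1,\dots,\sigma_p)$ is a non-identity permutation of $(\lambda_1,\dots,\lambda_p)$: then every $v_i$, $i\le p$, appears as a column of $\widehat{U}_p$, so no single-column $Z$ of the paper's form works. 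This permutation case is a genuine family of full-rank, non-global stationary points (Theorem~\ref{thm:StationaryPoint} permits it), and it is precisely what distinguishes \WTPM{} from the unweighted model. Your split into (i) zero column or some $\sigma_k>\lambda_p$, handled by a single-column perturbation giving $\lambda_j-d_k<0$, and (ii) all columns nonzero with a non-identity permutation, handled by the two-column rotation $z_i=a\hat u_j$, $z_j=b\hat u_i$, fills this gap cleanly. Your $2\times 2$ reduction is correct: the form is
\[
a^2(\mu w_j-\sigma_i)+b^2(\mu w_i-\sigma_j)+2ab\sqrt{(\mu w_i-\sigma_i)(\mu w_j-\sigma_j)},
\]
whose determinant simplifies, exactly as you predict, to $\mu(w_i-w_j)(\sigma_j-\sigma_i)<0$ at an inversion $i<j$, $\sigma_i>\sigma_j$ (I checked the algebra, including the $2ab\mu\hat s_i\hat s_j$ cross term from $\mu\tr(Z^\top\widehat X Z^\top\widehat X)$). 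So your argument is not merely a different route — it is a more careful one. One small bookkeeping note: in case (i) you should make sure the column index $k$ you perturb is the problematic one (the zero column, or the one carrying $\sigma_k>\lambda_p$), and that the unused $v_j$ is orthogonal to \emph{all} columns of $\widehat U_p$, including the ghost eigenvectors filled in for zero columns; both hold because $\lambda_1,\dots,\lambda_p$ are simple under~\eqref{eq:SpectrumOfA} and at most $p-1$ of them are occupied. With that, the proof is complete.
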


\begin{proof}

First, we discuss the case that the stationary point $\widehat{X} =
\widehat{U}_p\widehat{S}_p$ is of full column rank, \ie, $\hat{s}_i \neq
0, \forall i$. We focus on the sign of 
\begin{multline} \label{eq:TraceHessian}
    \tr\left(Z^\top \hess \fmu (\hat{X})[Z]\right) =\\
    \tr\left(Z^\top (A + \mu\hat{X} \hat{X}^\top ) Z
    + \mu Z^\top Z (\hat{X}^\top \hat{X} - W)
    + \mu Z^\top \hat{X} Z^\top \hat{X}\right),
\end{multline}
for $Z \in \bbR^{n\times p}$. If \eqref{eq:TraceHessian} at $\hat{X}$ is
strictly negative for a particular $Z$, then we could conclude that the
Hessian operator has negative eigenvalues and hence $\hat{X}$ is a strict
saddle point.

Since $\hat{X}$ is not global minimizer, there exists an eigenvector $v_i$
in $\{v_1,v_2,\dotsc,v_p\}$ satisfying $v_i^\top  \hat{U}_p = 0$ and
$\exists j,\;\lambda_i < \sigma_j$. Let $Z$ be the matrix whose $j$-th
column is $v_i$ and others are zero. Then we have
\begin{equation}\label{eq:FullRankCounterEx2}
    \tr\left(Z^\top  \hess \fmu (\hat{X})[Z]\right) = \lambda_i - \sigma_j < 0, 
\end{equation}
implying that full-rank stationary points are all saddle points except
global minima.

Next, we discuss the rank-deficient case. Without loss of generality we
assume the $j$-th column of $\hat{X}$ is zero, and there also exists
an eigenvector $v_i$ in $\{v_1,v_2,\dotsc,v_p\}$ satisfying $v_i^\top
\hat{X} = 0$. Let $j$-th column of $Z$ be $v_i$ and others are zero. We
can obtain
\begin{equation}\label{eq:RankDeficientCounterEx1}
    \tr\left(Z^\top  \hess \fmu (\hat{X})[Z]\right) = 
    \lambda_i - \mu w_j < 0. 
\end{equation} 
Hence, rank-deficient stationary points are all saddle points. 
\end{proof}

\subsection{Hessian Operator}
\label{sec:hessianmatrices}

For first-order optimization methods, the local convergence rate relies on
the condition number of the Hessian operator~\cite{boyd_convex_2004}.
Specifically, in the neighborhood of a global minimum $X^*$, the error of
a gradient descent method with exact line search decreases linearly as
\begin{equation} \label{eq:ConvergeRateOfGradient}
    \fmu(X^{(j+1)}) - \fmu(X^*)  \leqslant 
    \bigl( 1 - \kappa^{-1} \bigr)
    \bigl( \fmu(X^{(j)}) - \fmu(X^*) \bigr)
\end{equation}
where $X^{(j)}$ denotes the iteration variable at $j$-th iteration and
$\kappa$ denotes the condition number of the Hessian operator at $X^*$.

From Theorem~\ref{thm:GlobalMin}, we find that all global minima of
\eqref{eq:WeightedTracePenalty} are isolated points and the Hessian
operator at any global minimizer $X^*$ is strictly positive definite. In
order to give a depiction of the local convergence rate of
\eqref{eq:WeightedTracePenalty}, we present a tight estimation of the
condition number of the Hessian operator in
Theorem~\ref{thm:ConditionNumber}.

\begin{theorem}
    \label{thm:ConditionNumber}
    Assume $A$ and $W$ satisfy \eqref{eq:ConditionOfA} and
    \eqref{eq:ConditionOfW}. Let $X^*$ be a global minimizer of $\fmu$.
    Then
    \begin{subequations} \label{eq:ConditionNumber}
        \begin{align}
            \kappa\left(\hess \fmu (X^*)\right) 
            & \triangleq \frac{
                \max \limits_{\fnorm{Z}=1} \tr\left(Z^\top
                \hess \fmu (X^*)[Z] \right)
                }
                {
                \min \limits_{\fnorm{Z}=1} \tr\left(Z^\top
                \hess \fmu (X^*)[Z]\right)
                },\label{eq:ConditionNumber:a}\\
            & = \frac{
                \max\left\{
                    \lambda_n-\lambda_1,
                    2(\mu w_1 - \lambda_1), 
                    \max\limits_{i<j}\lambda_{\max}(M_{ij})
                    \right\}
                    }
                    {
                \min\left\{
                    \lambda_{p+1}-\lambda_p,
                    2(\mu w_p - \lambda_p), 
                    \min\limits_{i<j}\lambda_{\min}(M_{ij})
                    \right\}
                    }\label{eq:ConditionNumber:b}, 
        \end{align}
    \end{subequations}
    where $\lambda_{\max}(\cdot)$ and $\lambda_{\min}(\cdot)$ denote the
    largest and the smallest eigenvalue of the matrix 
    \[
        M_{ij} = 
        \begin{pmatrix}
            \mu w_i - \lambda_j
            & \sqrt{(\mu w_i - \lambda_i)(\mu w_j - \lambda_j)}\\
            \sqrt{(\mu w_i - \lambda_i)(\mu w_j - \lambda_j)}
            & \mu w_j - \lambda_i
        \end{pmatrix}
    \]
    respectively.
\end{theorem}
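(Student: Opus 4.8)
The plan is to diagonalize the Hessian operator $\hess\fmu(X^*)$ by decomposing an arbitrary test matrix $Z\in\bbR^{n\times p}$ along the eigenbasis $\{v_1,\dots,v_n\}$ of $A$. Write $Z = \sum_{k=1}^{n}\sum_{l=1}^{p} z_{kl}\, v_k e_l^\top$, so that $\fnorm{Z}^2 = \sum_{k,l} z_{kl}^2$, and substitute $X^* = V_p S_p$ (with $S_p^2 = \diag(\mu w_i - \lambda_i)/\mu$ after using $s_i^2 = w_i - \lambda_i/\mu$) into the explicit formula \eqref{eq:hessian} for the Hessian. Because $X^{*\top}X^*$ is diagonal and $X^*$ lives in $\mathrm{span}(v_1,\dots,v_p)$, each term $AZ$, $ZX^{*\top}X^*$, $XZ^\top X$, $XX^\top Z$, $ZW$ expands cleanly in the $v_k e_l^\top$ basis, and the quadratic form $\tr(Z^\top\hess\fmu(X^*)[Z])$ splits into a sum of decoupled pieces indexed by the column $l$ and by whether the row index $k$ is $\le p$ or $>p$.

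The key structural observation is that the quadratic form decomposes into three types of blocks. First, for each pair $(k,l)$ with $k>p$, i.e.\ the components of $Z$ orthogonal to the invariant subspace, the contribution is $(\lambda_k - \lambda_l + (\mu w_l - \lambda_l)) z_{kl}^2 = (\lambda_k + \mu w_l - 2\lambda_l)z_{kl}^2$; wait — more carefully, the coefficient here is $\lambda_k - \lambda_l$ coming from $AZ$ and $-ZW$ offset by the $XX^\top Z$-type terms, and one should track that it reduces to $\lambda_k - \lambda_l$ plus a nonnegative shift, whose extremes over $k>p$, $l\le p$ give the $\lambda_n - \lambda_1$ (upper) and $\lambda_{p+1}-\lambda_p$ (lower) entries. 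Second, the diagonal components $z_{ll}$ (the "radial" directions along $v_l$ inside column $l$) each contribute $2(\mu w_l - \lambda_l) z_{ll}^2$, giving the $2(\mu w_1-\lambda_1)$ and $2(\mu w_p-\lambda_p)$ terms. Third, for each pair $i<j$ the two cross components $z_{ij}$ (component of column $j$ along $v_i$) and $z_{ji}$ (component of column $i$ along $v_j$) couple through the term $\mu X Z^\top X$, producing exactly the $2\times 2$ matrix $M_{ij}$ acting on $(z_{ij}, z_{ji})^\top$; here the off-diagonal entry $\sqrt{(\mu w_i-\lambda_i)(\mu w_j-\lambda_j)}$ is $\mu s_i s_j$, which is how $XZ^\top X$ mixes the two. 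Collecting, the quadratic form is a direct sum of scalars $\lambda_k + (\mu w_l - \lambda_l) - \lambda_l$-type terms, scalars $2(\mu w_l-\lambda_l)$, and the $2\times 2$ forms $M_{ij}$, so the max and min of the Rayleigh quotient over $\fnorm{Z}=1$ are the max and min over all these eigenvalues, which is precisely \eqref{eq:ConditionNumber:b}.

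The main obstacle is the bookkeeping: getting the five Hessian terms expanded in the $v_k e_l^\top$ basis without sign or index errors, and in particular verifying that the only coupling is within each pair $\{(i,j),(j,i)\}$ and that it assembles exactly into $M_{ij}$ rather than some other $2\times2$ matrix. I would organize this by first treating the block $k>p$ (where $X^*$ has no component, so several terms vanish), then the block $k\le p$ with $l\le p$, and within the latter separating $k=l$ from $k\ne l$; the $k\ne l$ case is where $z_{kl}$ and $z_{lk}$ must be handled jointly. One should also check that $M_{ij}$ is positive definite under Assumption~\ref{assump:ConditionOfW} (its diagonal entries $\mu w_i-\lambda_j$ and $\mu w_j - \lambda_i$ are positive since $\mu w_p > \lambda_p \ge \lambda_i,\lambda_j$, and its determinant $(\mu w_i-\lambda_j)(\mu w_j-\lambda_i) - (\mu w_i-\lambda_i)(\mu w_j-\lambda_j)$ simplifies to $(\mu w_i - \mu w_j)(\lambda_i - \lambda_j) > 0$ by the opposite orderings of $w$ and $\lambda$), which simultaneously reconfirms Theorem~\ref{thm:GlobalMin}'s claim that $\hess\fmu(X^*)\succ 0$ and legitimizes writing the condition number as a ratio of positive quantities.
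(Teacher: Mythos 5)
Your proposal is correct and follows essentially the same route as the paper: decompose $Z$ along the eigenbasis of $A$ (the paper writes $Z = V_p Z_1 + \bar V_p Z_2$, which is just your $v_k e_l^\top$ expansion packaged in blocks), observe that the quadratic form block-diagonalizes into scalar pieces $\lambda_k - \lambda_l$ for $k>p$, scalar pieces $2(\mu w_l - \lambda_l)$ for $k=l\le p$, and $2\times 2$ pieces $M_{ij}$ for $i\ne j\le p$, then read off the extremes. Two small algebraic slips worth fixing: the $k>p$ coefficient is exactly $\lambda_k - \lambda_l$ (the $\mu s_l^2 v_k e_l^\top$ and $-\mu w_l v_k e_l^\top$ contributions cancel up to $-\lambda_l$, with no residual "nonnegative shift"), and $\det M_{ij}$ simplifies to $\mu(w_i - w_j)(\lambda_j - \lambda_i)$, not $(\mu w_i - \mu w_j)(\lambda_i - \lambda_j)$; for $i<j$ both factors are positive, which gives the positive-definiteness you correctly assert.
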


\begin{proof}

Given a $Z \in \bbR^{n\times p}$ such that $\fnorm{Z}=1$, it can be
represented as 
\begin{align*}
    &Z = V_p Z_1 + \widebar{V}_p Z_2,\\
    &\tr(Z_1^\top Z_1)+\tr(Z_2^\top Z_2) = 1,
\end{align*}
where $\widebar{V}_p = (v_{p+1},\ldots,v_n)$, $Z_1 \in \bbR^{p\times p}$,
and $Z_2 \in \bbR^{(n-p) \times p}$. Using the expressions of the global
minimizer \eqref{eq:GlobalMin} and the Hessian
operator~\eqref{eq:hessian}, we obtain,
\begin{multline} \label{eq:traceZEq}
    \tr\left(Z^\top \hess \fmu (X^*)[Z]\right) 
        =\tr(Z_2^\top \widebar{\Lambda}_p Z_2) - \tr(Z_2^\top Z_2\Lambda_p)\\
        +\tr(\mu Z_1^\top W Z_1)-\tr(Z_1^\top Z_1\Lambda_p) 
        +\tr(\mu Z_1^\top S_pZ_1^\top S_p), 
\end{multline}
where $\Lambda_p = \diag(\lambda_1,\ldots,\lambda_p)$ and
$\widebar{\Lambda}_p = \diag(\lambda_{p+1},\ldots,\lambda_n)$. The first
two terms in \eqref{eq:traceZEq} can be bounded as,
\begin{equation}\label{eq:BoundsOfZ2Part}
    \tr(Z_2^\top Z_2)(\lambda_{p+1}-\lambda_p) \leqslant 
    \tr(Z_2^\top  \bar{\Lambda}_p Z_2) - \tr(Z_2^\top Z_2\Lambda_p) \leqslant 
    \tr(Z_2^\top Z_2)(\lambda_{n}-\lambda_1),
\end{equation}
where the upper and lower bounds are achieved when $Z_2 = c \cdot
(e_{n-p},0,\ldots,0)$ or $Z_2 = c \cdot (0,\ldots,0,e_{1})$, where $e_i$
denotes the $i$-th column of the identity matrix and $c$ is a
scalar.

Next, we would like to bound the terms in \eqref{eq:traceZEq} associated
with $Z_1$. Let $Z_1 = (z_{ij})_{p\times p}$. A short calculation shows
that
\begin{equation}\label{eq:Z1Part}
    \begin{split}
        & \tr(\mu Z_1^\top W Z_1) - \tr(Z_1^\top Z_1 \Lambda_p)
        + \mu \tr(Z_1^\top S_p Z_1^\top S_p) \\
        = & 2\sum_{i=1}^p (\mu w_i - \lambda_i) z_{ii}^2 + 
        \sum_{j>i}
        \begin{pmatrix}
            z_{ij} & z_{ji}
        \end{pmatrix}
        M_{ij} 
        \begin{pmatrix}
          z_{ij}\\z_{ji}
        \end{pmatrix},
    \end{split}
\end{equation}
where $M_{ij} \in \bbR^{2\times 2}$ as defined in the theorem. Eigenvalues
of $M_{ij}$ can be calculated explicitly,
\begin{equation} \label{eq:LambdaMij}
    \begin{split}
        \lambda(M_{ij})
        & = \frac{1}{2}(\mu w_i + \mu w_j - \lambda_i - \lambda_j) \\
        & \pm \sqrt{\frac{1}{4}(\mu w_i + \mu w_j - \lambda_i
        - \lambda_j)^2 - \mu(w_i - w_j)(\lambda_j-\lambda_i)},
    \end{split}
\end{equation}
and both of them are positive. Thus, \eqref{eq:Z1Part} has the lower and
upper bounds, 
\begin{subequations}\label{eq:BoundsOfZ1Part}
    \begin{align}
        \label{eq:BoundsOfZ1Part:a}
        \eqref{eq:Z1Part} & \geqslant \tr(Z_1^\top Z_1)
        \cdot\min\left\{2(\mu w_p - \lambda_p), 
        \min\limits_{i<j}\lambda_{\min}(M_{ij})\right\},\\
        \label{eq:BoundsOfZ1Part:b}
        \eqref{eq:Z1Part} & \leqslant \tr(Z_1^\top Z_1)
        \cdot\max\left\{2(\mu w_1 - \lambda_1), 
        \max\limits_{i<j}\lambda_{\max}(M_{ij})\right\}.
    \end{align}
\end{subequations}
Again, both bounds in~\eqref{eq:BoundsOfZ1Part} are achievable.
In~\eqref{eq:BoundsOfZ1Part:a}, the inequality is saturated if $Z_1$ is
parallel to $(0,\dotsc,0,e_p)$, or if $(z_{ij},z_{ji})^\top$ is the
eigenvector corresponding to $\min_{i<j}\lambda_{\min}(M_{ij})$ and other
entries of $Z_1$ are zero. Similarly in~\eqref{eq:BoundsOfZ1Part:b}, the
equality is satisfied if $Z_1$ is parallel to $(e_1,0,\dotsc,0)$, or if
$(z_{ij},z_{ji})^\top$ is the eigenvector corresponding to
$\max_{i<j}\lambda_{\max}(M_{ij})$ and other entries of $Z_1$ are zero.
Putting all bounds in~\eqref{eq:BoundsOfZ2Part}
and~\eqref{eq:BoundsOfZ1Part} together, we proved
\eqref{eq:ConditionNumber}.
\end{proof}

Theorem~\ref{thm:ConditionNumber} gives the exact condition number at
global minimizers, which provides an estimation of the local convergence
rate around the optima. Based on \eqref{eq:ConditionNumber}, we could
estimate the local convergence if the weight matrix $W$ is chosen. In
order to minimize the condition number of Hessian operator, we will
provide an intuitive approach to select a near-optimal weight matrix in
the next section.

\begin{remark}

We give a discussion for matrices with degenerate eigenvalues among
$\lambda_1, \lambda_2, \dots, \lambda_p$,
i.e., we relax the assumption~\eqref{eq:SpectrumOfA} as
\begin{equation} \label{eq:RelaxedSpectrumOfA}
    \lambda_1 \leqslant \lambda_2 \leqslant \cdots \leqslant \lambda_p
    < \lambda_{p+1} \leqslant \cdots \leqslant \lambda_n.
\end{equation}
Theorem~\ref{thm:StationaryPoint} and Theorem~\ref{thm:NoLocalMinima}
remains valid in their current forms. Theorem~\ref{thm:GlobalMin} is also
valid up to some changes due to the non-uniqueness of the eigenvectors of
$A$. We give an example to show the idea of the required changes. Assume
$\lambda_i = \lambda_{i+1}$ for $i < p$. Then any vector $v \in
\mathrm{span}\{v_i, v_{i+1}\}$ is an eigenvector of $A$ corresponding to
$\lambda_i$ and $\lambda_{i+1}$. Therefore, the matrix $V_p$ in
Theorem~\ref{thm:GlobalMin} may not be in the current form, $V = (v_1,
v_2, \dotsc, v_n)$. Instead, the matrix $V_p$ could be changed to
\begin{equation*}
    V_p = (v_1, v_2, \dotsc, v_p) \cdot 
    \begin{pmatrix}
        I_{i-1} & & \\
        & Q_1 & \\
        & & I_{n-i-1}
    \end{pmatrix},
\end{equation*}
where $Q_1$ is a $2 \times 2$ orthogonal matrix. More generally, if there
are $r$ distinct eigenvalues among $\{\lambda_1, \lambda_2, \dotsc,
\lambda_p \}$, the matrix $V_p$ in Theorem~\ref{thm:GlobalMin} admits the
form
\begin{equation*}
    V_p = (v_1, v_2, \dotsc, v_p) \cdot 
    \begin{pmatrix}
        Q_1 & & & \\
        & Q_2 & & \\
        & & \ddots & \\
        & & & Q_r
    \end{pmatrix},
\end{equation*}
where $Q_i$ is an orthogonal matrix with dimension being the degree of
degeneracy of the $i$th distinct eigenvalues.

However, as for Theorem~\ref{thm:ConditionNumber}, since the global
minimizers are not isolated, the Hessian matrix of $\fmu$ at $X^*$ cannot
be positive definite. We could employ techniques in
Section~\ref{sec:hermitianmatrices} to remove the nonzero null space in
the Hessian matrix.
\end{remark}

\subsection{Near-Optimal Weight Matrix}
\label{sec:optimalW}

According to the analysis above, the parameter $\mu$ and the weight matrix
$W$ could be considered as an ensemble $\mu W$. That means that the degree
of freedom of the parameters in~\eqref{eq:WeightedTracePenalty} is $p$
instead of $p+1$. Therefore, we can always set that $\mu = 1$ in analysis.
Given the exact expression of the condition number $\kappa\left(\hess \fmu
(X^*)\right)$, we would like to minimize the condition number with respect
to the weight matrix $W$ and obtain the optimal weight matrix $W^*$, \ie, 
\begin{equation} \label{eq:MinW}
    W^* = \argmin_W 
    \frac{
        \max\left\{\lambda_n-\lambda_1,2( w_1 - \lambda_1), 
        \max\limits_{i<j}\lambda_{\max}(M_{ij})\right\}
        }
        {
        \min\left\{\lambda_{p+1}-\lambda_p,2(w_p - \lambda_p), 
        \min\limits_{i<j}\lambda_{\min}(M_{ij})\right\}
        }.
\end{equation}
However, \eqref{eq:MinW} relies on the eigenvalues of $A$, which is not
known \textit{a priori}. Hence
solving \eqref{eq:MinW} exactly is infeasible.

In the following, we derive a near-optimal solution to \eqref{eq:MinW}
with eigenvalues of $A$. The final near-optimal solution relies on the
relative eigenvalue distribution of $A$, which is known
\textit{a priori} in many
practical applications, \eg, FCI. In quantum chemistry, an FCI
solver is usually applied after Hartree-Fock calculation, which offers a
good estimation of the
eigenvalues~\cite{
Slater_1951_ASimplificationHF}. For general eigenvalue problems, we could
estimate the eigenvalues of $A$ at a lower cost than
eigensolvers~\cite{Lin_2016_ApproximatingSpectralDensities}.

To simplify the later discussion, we assume that $\lambda_p +
\lambda_{p+1} < \lambda_1 + \lambda_n$, which in almost all practical
applications is satisfied if $n \gg p$. We choose the weight matrix $W$
such that
\begin{equation} \label{eq:ChooseWCondition}
    \frac{\lambda_1 + \lambda_n}{2} \geqslant w_1 > \ldots > w_p
    \geqslant \frac{\lambda_p + \lambda_{p+1}}{2}.
\end{equation}
Then the objective function in \eqref{eq:MinW} can be simplified as
\begin{equation} \label{eq:ApproximateConditionNumber}
    \kappa\left(\hess \fmu (X^*)\right) = 
    \frac{
        \max\left\{\lambda_n-\lambda_1, 
        \max\limits_{i<j}\lambda_{\max}(M_{ij})\right\}
        }
        {
        \min\left\{\lambda_{p+1}-\lambda_p, 
        \min\limits_{i<j}\lambda_{\min}(M_{ij})\right\}
        }.
\end{equation}
Recall that the eigenvalues of $M_{ij}$ as in \eqref{eq:LambdaMij} admit
\begin{equation*}
    \begin{split}
        \lambda(M_{ij})
        = & \frac{1}{2}(w_i+w_j - \lambda_i -\lambda_j)
        \left(1 \pm \sqrt{
            1-\frac{
                4(w_i - w_j)(\lambda_j-\lambda_i)
                }
                {
                (w_i+w_j - \lambda_i -\lambda_j)^2
                }
                }
        \right) \\
        \leqslant & w_i + w_j - \lambda_i - \lambda_j
        \leqslant \lambda_n - \lambda_1
    \end{split}
\end{equation*}
where the second inequality follows from~\eqref{eq:ChooseWCondition}. Thus,
to find the minimizer of \eqref{eq:MinW} means to maximize the
denominator, whose difficulty lies in solving
\begin{equation} \label{eq:MaxMinEigenvalueMij}
    \max_{w_1>\cdots > w_p} \min_{1\leqslant i<j\leqslant p}
    \frac{1}{2}(w_i+w_j - \lambda_i -\lambda_j)
    \left(1- \sqrt{
        1-\frac{
            4(w_i - w_j)(\lambda_j-\lambda_i)
            }
            {
            (w_i+w_j - \lambda_i -\lambda_j)^2
            }
            }
    \right).
\end{equation}
Exactly solving \eqref{eq:MaxMinEigenvalueMij} remains complicated. Here
we give an intuitive analysis. Notice that, due to
\eqref{eq:ChooseWCondition}, $(w_i + w_j - \lambda_i - \lambda_j)$ is
lower bounded by $\lambda_{p+1} - \lambda_p$. When $\frac{4(w_i - w_j)
(\lambda_j-\lambda_i)}{(w_i+w_j - \lambda_i -\lambda_j)^2} > c$ for all
$i<j$ and $c > 0$ is a constant bounded away from zero, then we have
$\kappa\left(\hess \fmu (X^*)\right) < \frac{\lambda_n - \lambda_1}{\lambda_{p+1} -
\lambda_p}\frac{2}{1 - \sqrt{1-c}}$. When some $\frac{4(w_i - w_j)
(\lambda_j-\lambda_i)}{(w_i+w_j - \lambda_i -\lambda_j)^2}$ approaches
zero, \eg, the eigengap $\lambda_j - \lambda_i$ is small, we apply the
linear approximation to the square root term in
\eqref{eq:MaxMinEigenvalueMij} and obtain,
\begin{equation} \label{eq:LinearApproxMaxMin}
    \max_{w_1>\cdots > w_p} \min_{1\leqslant i<j\leqslant p}
    \frac{(w_i - w_j)(\lambda_j-\lambda_i)}
    {(w_i+w_j - \lambda_i -\lambda_j)}.
\end{equation}
Solving the max-min problem~\eqref{eq:MaxMinEigenvalueMij}
exactly is difficult. Since \eqref{eq:ChooseWCondition} give lower and upper
bounds of the denominator in~\eqref{eq:LinearApproxMaxMin},  
we only focus on optimizing the numerator part, 
\begin{equation*}
    F(W) = \max_{w_1>\cdots > w_p} \min_{1\leqslant i<j\leqslant p}
    (w_i - w_j)(\lambda_j-\lambda_i),
\end{equation*}
which has the analytical solution $\widehat{W}$ satisfying
\begin{align*}
    \hat{w}_1 &= \frac{\lambda_1 + \lambda_n}{2},\\
    \hat{w}_p &= \frac{\lambda_p + \lambda_{p+1}}{2},\\
    \hat{w}_i - \hat{w}_{i+1} &= \left(
        \sum_{j=1}^{p-1} (\lambda_{j+1}-\lambda_j)^{-1}
        \right)^{-1} \frac{\hat{w}_1-\hat{w}_p}{\lambda_{i+1}-\lambda_i},\\
    F(\widehat{W}) &= \left(
        \sum_{j=1}^{p-1}(\lambda_{j+1}-\lambda_j)^{-1}
        \right)^{-1}(\hat{w}_1-\hat{w}_p).
\end{align*}
Furthermore, through a simple calculation, we obtain that
\begin{equation*}
    \frac{F(\widehat{W})}{p-1} \leqslant F(\widetilde{W})
    \leqslant F(\widehat{W}),
\end{equation*}
where the weight matrix $\widetilde{W}$ is evenly distributed between
$\hat{w}_p$ and $\hat{w}_1$. Such an inequality indicates that the uniform
weight matrix $\widetilde{W}$ is a simple but effective choice for small
$p$ because it could use a few eigenvalues known \textit{a
priori} to determine a weight matrix with a controlled condition number.
Later in Section~\ref{sec:numericalresult}, all the numerical experiments
use the evenly distributed weight matrix $\widetilde{W}$ since in practice
it needs no extra cost. 

\subsection{Generalization for Hermitian Matrices}
\label{sec:hermitianmatrices}

In this section, we will discuss the extension of
\eqref{eq:WeightedTracePenalty} for the eigenvalue problem of complex Hermitian
matrices. Given that $A$ is an Hermitian matrix and $(\Lambda, V)$ is the
eigenpairs such that
\begin{equation} \label{eq:ComplexConditionOfA}
    A = V \Lambda V^\conj,
\end{equation}
where $V = (v_1, v_2, \dots, v_n) \in \bbC^{n\times n}$ and $\Lambda =
\diag(\lambda_1, \lambda_2, \dots, \lambda_n)$ such that
\begin{equation*}
    \lambda_1 < \lambda_2 < \cdots < \lambda_p < 
    \lambda_{p+1} \leqslant \cdots \leqslant \lambda_n.
\end{equation*}
We generalize the weighted trace penalty
model~\eqref{eq:WeightedTracePenalty} for complex Hermitian matrices as
\begin{equation} \label{eq:ComplexModel}
    \min_{X\in \bbC^{n \times p}}
    \fmu(X) = \frac{1}{2}\tr(X^\conj AX)
    + \frac{\mu}{4}\fnorm{X^\conj X - W}^2,
\end{equation}
where the conditions on $\mu$ and $W$ remain unchanged, \ie, $\mu$ is a
positive scalar and $W$ is a real diagonal matrix that satisfies
Assumption~\ref{assump:ConditionOfW}. The first-order optimal condition is
\begin{equation} \label{eq:ComplexFirstOrderCondition}
    \nabla \fmu (X) = AX + \mu X(X^{\mathrm{H}}X-W) = 0.
\end{equation}
Theorems~\ref{thm:StationaryPoint} and~\ref{thm:GlobalMin} could be
generalized to complex matrices, which are detailed in
Theorem~\ref{thm:ComplexStationaryPoint} and~\ref{thm:ComplexGlobalMin}.
The proofs of these theorems remain similar to the cases of real matrices. 

\begin{theorem}
    \label{thm:ComplexStationaryPoint}
    Assume $A$ and $W$ satisfy \eqref{eq:ComplexConditionOfA} and
    Assumption~\ref{assump:ConditionOfW} respectively. Any stationary
    point $\widehat{X}$ of \eqref{eq:ComplexModel} has the form
    \begin{equation}
        \widehat{X} = \widehat{U}_p \widehat{S}_p,
    \end{equation}
    where $\widehat{U}_p = (\hat{u}_1, \hat{u}_2, \dots, \hat{u}_p) \in
    \bbC^{n\times p}$ and $\widehat{S}_p = \diag(\hat{s}_1, \hat{s}_2,
    \dots, \hat{s}_p) \in \bbR^{p\times p}$ such that 
    \begin{equation}\label{eq:ComplexPropertyStationaryPoint}
        \begin{split}
            A \hat{u}_i = \sigma_i \hat{u}_i,
            \quad \hat{u}_i^{\mathrm{H}} \hat{u}_j = \delta_{ij} \text{ and,} \\
            \hat{s}_i \in \left\{0, \sqrt{w_i - \frac{\sigma_i}{\mu}}
            \right\}.
        \end{split}
    \end{equation}
\end{theorem}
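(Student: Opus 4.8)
The plan is to mirror the proof of Theorem~\ref{thm:StationaryPoint} almost verbatim, replacing every transpose by a conjugate transpose and paying attention to the two places where complex scalars could a priori appear. First I would write down the first-order condition~\eqref{eq:ComplexFirstOrderCondition}, left-multiply by $X^{\mathrm{H}}$ to get $X^{\mathrm{H}}AX = \mu X^{\mathrm{H}}X(W - X^{\mathrm{H}}X)$, and observe that the left-hand side is Hermitian. Since $W$ is real diagonal and $X^{\mathrm{H}}X$ is Hermitian, taking the conjugate transpose of this identity forces $X^{\mathrm{H}}XW = WX^{\mathrm{H}}X$; because the diagonal entries of $W$ are distinct, this commutation relation implies $X^{\mathrm{H}}X$ is diagonal (and automatically real, being a diagonal Hermitian matrix). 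Hence the columns of any stationary point are mutually orthogonal in the Hermitian inner product, or zero.

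Next I would decompose the first-order condition column by column as $A\hat{x}_i = d_i\hat{x}_i$ with $d_i = \mu(w_i - \hat{x}_i^{\mathrm{H}}\hat{x}_i)$. The key point requiring a sentence of care is that $d_i$ is real: $w_i$ is real and $\hat{x}_i^{\mathrm{H}}\hat{x}_i = \norm{\hat{x}_i}_2^2 \geqslant 0$, so $d_i \in \bbR$ and indeed $A\hat{x}_i = d_i\hat{x}_i$ is a genuine (real-eigenvalue) eigen-equation for the Hermitian matrix $A$, consistent with $\sigma_i$ being one of the real eigenvalues $\lambda_k$. If $\hat{x}_i = 0$ we take $\hat{s}_i = 0$; otherwise write $\hat{x}_i = \hat{u}_i\hat{s}_i$ with $\hat{u}_i$ a unit-norm eigenvector (with respect to the Hermitian norm) associated to some eigenvalue $\sigma_i$ and $\hat{s}_i > 0$, and solve $\sigma_i = \mu(w_i - \hat{s}_i^2)$ to obtain $\hat{s}_i = \sqrt{w_i - \sigma_i/\mu}$; Assumption~\ref{assump:ConditionOfW} guarantees the radicand is positive whenever $\sigma_i \leqslant \lambda_p$, and in any case $\hat{s}_i$ real forces $\sigma_i \leqslant \mu w_i$. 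Diagonality of $\widehat{X}^{\mathrm{H}}\widehat{X}$ gives $\hat{u}_i^{\mathrm{H}}\hat{u}_j = \delta_{ij}$ for the nonzero columns, and for any zero column one extends the partial orthonormal system by choosing an additional eigenvector of $A$ orthogonal to the rest (possible since $A$ is Hermitian and has a full orthonormal eigenbasis). Conversely any $\widehat{X}$ of the stated form satisfies~\eqref{eq:ComplexFirstOrderCondition} by direct substitution.

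I do not anticipate a genuine obstacle here: the proof is a routine transcription, and the only substantive difference from the real case is the bookkeeping needed to confirm that $d_i$ stays real and that the resulting eigen-equation is the ordinary Hermitian one, so that $\sigma_i$ and $\hat{s}_i$ can be taken real exactly as claimed in~\eqref{eq:ComplexPropertyStationaryPoint}. The mildest point of care is the commutation-to-diagonality step: for a Hermitian matrix $B = X^{\mathrm{H}}X$ and a diagonal $W$ with distinct real entries, $(BW - WB)_{ij} = b_{ij}(w_j - w_i)$, so $BW = WB$ with $w_i \neq w_j$ for $i \neq j$ indeed forces $b_{ij} = 0$ off the diagonal, giving diagonality; this is the same argument as in the real case and extends without change. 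Therefore I would present the proof as ``identical to that of Theorem~\ref{thm:StationaryPoint} with transposes replaced by conjugate transposes, noting that $\hat{x}_i^{\mathrm{H}}\hat{x}_i$ is real and nonnegative so that $d_i$ is real and $A\hat{x}_i = d_i\hat{x}_i$ is an ordinary eigen-equation of the Hermitian matrix $A$,'' and leave it at that.
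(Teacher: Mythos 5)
Your proposal is correct and matches the paper's intent: the paper explicitly states that the proofs of Theorems~\ref{thm:ComplexStationaryPoint} and~\ref{thm:ComplexGlobalMin} ``remain similar to the cases of real matrices,'' and your transcription of the Theorem~\ref{thm:StationaryPoint} argument with conjugate transposes is exactly that, with the right checkpoints verified (Hermitianity of $X^{\mathrm{H}}AX$ forcing $X^{\mathrm{H}}X$ diagonal via commutation with $W$, reality and nonnegativity of $\hat{x}_i^{\mathrm{H}}\hat{x}_i$ making $d_i$ real, and the phase being absorbed into $\hat{u}_i$ so that $\hat{s}_i$ can be taken real positive). No gaps.
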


\begin{theorem}
    \label{thm:ComplexGlobalMin}
    Assume $A$ and $W$ satisfy \eqref{eq:ComplexConditionOfA} and
    Assumption~\ref{assump:ConditionOfW} respectively. The global
    minimizer $X^*$ of \eqref{eq:ComplexModel} has the form,
    \begin{equation*}
        X^* = V_p S_p,
    \end{equation*}
    where $V_p = (v_1 e^{\imath \theta_1}, v_2 e^{\imath
        \theta_2}, \dotsc, v_p e^{\imath \theta_p})$ for every $\theta_i
        \in \bbR$, and $S_p = \diag(s_1, s_2, \dots, s_p) \in
        \bbR^{p \times p}$ such that 
    \begin{equation*}
      s_i^2 = w_i - \frac{\lambda_i}{\mu}. 
    \end{equation*}
\end{theorem}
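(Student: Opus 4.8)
The plan is to mirror the proof of Theorem~\ref{thm:GlobalMin} almost verbatim, using Theorem~\ref{thm:ComplexStationaryPoint} as the starting point, and to isolate the single place where the complex structure genuinely intervenes. First I would record that $\fmu$ is real-valued on $\bbC^{n\times p}$: $\tr(X^\conj A X)$ is the trace of a Hermitian matrix and hence real, while $\fnorm{X^\conj X - W}^2$ is manifestly real. By Theorem~\ref{thm:ComplexStationaryPoint} any stationary point has the form $\widehat{X} = \widehat{U}_p\widehat{S}_p$ with $A\hat{u}_i = \sigma_i\hat{u}_i$, $\hat{u}_i^\conj \hat{u}_j = \delta_{ij}$, and $\hat{s}_i \in \{0, \sqrt{w_i - \sigma_i/\mu}\}$. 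Since $\widehat{X}^\conj\widehat{X} = \widehat{S}_p^2$ is diagonal, substituting into $\fmu$ reproduces exactly the real-case expression
\[
  2\fmu(\widehat{X}) = \sum_{i=1}^p \frac{\mu}{2}w_i^2 - \sum_{i:\,\hat{s}_i\neq 0}\frac{(\sigma_i - \mu w_i)^2}{2\mu},
\]
so minimizing $\fmu$ over stationary points is the same combinatorial problem as before: which eigenvalues $\sigma_i$ are selected, and whether $\hat{s}_i$ vanishes.

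Second, I would rule out rank deficiency and pin down the spectrum. If $\hat{s}_i = 0$ for some $i$, then Theorem~\ref{thm:ComplexStationaryPoint} forces $\sigma_i = \mu w_i$, contributing nothing to the sum; but Assumption~\ref{assump:ConditionOfW} supplies at least $p$ eigenvalues strictly below every $\mu w_i$, so one can swap in an unused eigenvalue $\lambda$ with $(\lambda - \mu w_i)^2 > 0$ and strictly decrease $\fmu$, proving a global minimizer has full column rank. The same rearrangement argument then forces $\{\sigma_1,\dots,\sigma_p\}$ to be a permutation of $\{\lambda_1,\dots,\lambda_p\}$, and the Rearrangement Inequality~\cite{Hardy_2001_Inequalities} applied to $\sum_i(-2\mu\sigma_i w_i) + \mathrm{const}$ together with the ordering~\eqref{eq:ConditionOfW} gives $\sigma_i = \lambda_i$, hence $s_i^2 = w_i - \lambda_i/\mu$.

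The only genuinely new point is the description of the eigenvector factor, and it is exactly where the complex field enters. In the real case the global minima are the $2^p$ isolated points $V_p\diag(\pm s_1,\dots,\pm s_p)$; over $\bbC$, because $\lambda_1,\dots,\lambda_p$ are simple, each unit eigenvector $\hat{u}_i$ of $\lambda_i$ is determined only up to a unimodular phase $e^{\imath\theta_i}$, and in particular the sign ambiguity $\pm$ is absorbed into that phase. I would therefore write $\hat{u}_i = v_i e^{\imath\theta_i}$ and collect the (positive) scalars into $S_p = \diag(s_1,\dots,s_p)$ with $s_i = \sqrt{w_i - \lambda_i/\mu}$, obtaining $X^* = V_p S_p$ with $V_p = (v_1 e^{\imath\theta_1},\dots,v_p e^{\imath\theta_p})$. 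Conversely, any such $X^*$ is a stationary point by Theorem~\ref{thm:ComplexStationaryPoint} and realizes the minimal value computed above, so it is a global minimizer. I do not expect a real obstacle; the only care needed is the bookkeeping of the phase freedom — keeping $S_p$ real while the phases are gathered into $V_p$ — and noting that simplicity of $\lambda_1,\dots,\lambda_p$ is precisely what makes the per-column phase the \emph{only} remaining degree of freedom.
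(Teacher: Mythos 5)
Your proposal is correct and follows essentially the same route the paper intends: the paper explicitly states that the proofs of Theorems~\ref{thm:ComplexStationaryPoint} and~\ref{thm:ComplexGlobalMin} ``remain similar to the cases of real matrices,'' and you mirror the proof of Theorem~\ref{thm:GlobalMin} step by step, correctly isolating the per-column phase freedom (allowed by simplicity of $\lambda_1,\dotsc,\lambda_p$) as the only new ingredient. One small cosmetic point: the clause ``Theorem~\ref{thm:ComplexStationaryPoint} forces $\sigma_i=\mu w_i$'' when $\hat{s}_i=0$ is not literally implied (a zero column leaves $\sigma_i$ free); what you actually use, and what is true, is simply that such a column contributes $\tfrac{\mu}{2}w_i^2$ with no subtracted term, which the swap argument then improves upon.
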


However, the properties of the Hessian operator for \eqref{eq:ComplexModel},
\begin{equation} \label{eq:ComplexHessian}
    \hess \fmu(X)[C] = A C + \mu (C X^\conj X + X C^\conj X
    + X X^\conj C - C W),
\end{equation}
change dramatically. The major difference is that the Hessian operator
\eqref{eq:ComplexHessian} is no longer a positive definite operator,
instead, it is positive semi-definite. From
Theorem~\ref{thm:ComplexGlobalMin}, we find that any global minimizer
$X^*$ multiplied by a phase rotation $e^{\imath \theta}$ remains a global
minimizer. Hence, unlike the symmetric matrix case where global minimizers
are isolated, for Hermitian matrices, the global minimizers are located on
a circle of the $np$-dimensional complex space and the Hessian operator at
these global minimizers is positive semi-definite but not positive
definite. In the following, we update our previous results and extend them
for Hermitian matrices.

Define the inner product of $X \in \bbC^{n \times p}$ and $Y \in \bbC^{n
\times p}$ as
\begin{equation*}
    \left\langle X, Y \right\rangle \triangleq \Re [\tr (X^\conj Y)],
\end{equation*}
where $\Re[\cdot]$ denotes the real part of a complex number.
For any global minimizer $X^*$,
each element in
\begin{equation}\label{eq:Manifold}
    \left\{X \in \bbC^{n \times p} \mid X = X^* e^{\imath \Theta},
    \Theta = \diag(\theta_1,\theta_2,\dots,\theta_p),
    \theta_i \in \bbR \right\},
\end{equation}
is still a global minimizer.
The tangent vectors of the manifold~\eqref{eq:Manifold} is 
\begin{equation*}
    T \triangleq \left\{\imath X^* \Gamma \in \bbC^{n\times p} \mid
    \Gamma = \diag(\gamma_1,\gamma_2,\dotsc,\gamma_p) \in \bbC^{p \times p}
    \right\}.
\end{equation*}
Then, we constrain the condition number of the Hessian operator on the
perpendicular manifold
\begin{equation} \label{eq:ManifoldConstrain}
    T^\bot = \left\{C \in \bbC^{n \times p} \mid
    \Im [C^\conj X^*]_{ii} = 0, \forall i \right\},
\end{equation}
where the subscript $ii$ denoted the $i$-th diagonal element and
$\Im[\cdot]$ denotes the imaginary part. Finally, we would like to
estimate the lower and upper bounds of the inner product, 
\begin{multline} \label{eq:ComplexConstrainedInnerProd}
    \left\langle C, \hess \fmu (X^*) [C] \right\rangle \Big|_{C \in T^\bot}
    = \tr(C^\conj A C) - \tr(C^\conj C \Lambda_p)\\
    + \tr(\mu C^\conj X^*(X^*)^\conj C)
    + \Re \left[\tr(\mu C^\conj X^* C^\conj X^*) \right]. 
\end{multline} 
Similar to the proof of Theorem~\ref{thm:ConditionNumber},
\eqref{eq:ComplexConstrainedInnerProd} is upper and lower bounded by the
numerator and the denominator in \eqref{eq:ConditionNumber:b} respectively.

Now, we are going to explain the reason behind splitting the space into
$T$ and $T^\bot$. Considering the gradient $\nabla \fmu(X)$
in~\eqref{eq:ComplexFirstOrderCondition}, it can be represented near the
global minimizer as
\begin{equation}\label{eq:ComplexGradientDecompose}
    \nabla \fmu (X) = G_X + \varDelta G_X
\end{equation}
where $G_X \in \{C \in \bbC^{n \times p}: \Im [C^{\mathrm{H}}X^*]_{ii} =
0,\;\forall i\}$ and $\lVert\varDelta G_X\rVert$ is $o(\norm{G_X})$. Let
$X = X^* + \varDelta X$. Consequently, without loss of generality let $\mu
= 1$ and \eqref{eq:ComplexGradientDecompose} can be shown by 
\begin{align}
    \nabla \fmu(X) 
    &= A(X^* + \varDelta X) + 
    (X^* + \varDelta X)\big((X^* + \varDelta X)^{\mathrm{H}}(X^*
    + \varDelta X) - W\big),\\
    &= A \cdot \varDelta X + X^* \big((X^*)^{\mathrm{H}} \varDelta X
    + \varDelta X^{\mathrm{H}} X^*\big)
    +\varDelta X\big((X^*)^{\mathrm{H}} X^* - W\big)\nonumber \\
    & \quad + o(\norm{\varDelta X}), \nonumber\\
    & \triangleq G_X + \varDelta G_X,
\end{align}
where the second equality adopts~\eqref{eq:ComplexFirstOrderCondition}.
Thus, 
\begin{align}\label{eq:XG}
    (X^*)^\conj G_X &= (X^*)^\conj A \varDelta X + (X^*)^\conj X^*
    \big((X^*)^{\mathrm{H}} \varDelta X + \varDelta X^{\mathrm{H}}
    X^*\big) \\
    & \quad +(X^*)^\conj \varDelta X\big((X^*)^{\mathrm{H}} X^*
    - W\big), \nonumber \\
    &= (W - \lvert S_p \rvert^2) (X^*)^\conj \varDelta X + (X^*)^\conj
    \varDelta X (\lvert S_p \rvert^2 - W) \nonumber \\
    &\quad + \lvert S_p \rvert^2 \big((X^*)^{\mathrm{H}} \varDelta X
    + \varDelta X^{\mathrm{H}} X^*\big), \nonumber
\end{align}
where $\lvert S_p \rvert^2 = \diag \big(\abs{s_1}^2, \dotsc ,
\abs{s_p}^2\big)$. It is revealed that the diagonal elements of the third
term are real, and the diagonal parts of the first two terms are opposite,
which means the diagonal of~\eqref{eq:XG} is real and $G_X \in T^\bot$.

That indicates the gradient in the neighborhood of the global minimizer is
located on the manifold~\eqref{eq:ManifoldConstrain} dominantly, and when
we use a gradient descent method to solve the optimization, the
convergence rate is mainly dependent on the constrained condition number.
Though the Hermitian matrix is interesting in some applications, in our
target applications, the matrices are real symmetric. Hence, we omit the
detail for the analysis of Hermitian matrices.

\section{Algorithms}
\label{sec:algorithm}

In this section, we introduce several algorithms to address the
unconstrained nonconvex minimization
problem~\eqref{eq:WeightedTracePenalty} for large-scaled matrices $A$.

\subsection{Gradient Descent Methods}

A common choice for our unconstrained minimization problem
\eqref{eq:WeightedTracePenalty} is the gradient descent method. A general
gradient descent method with various stepsize strategies admits the form,
\begin{equation} \label{eq:GDForm}
    X^{(j+1)} = X^{(j)} - \alpha^{(j)} \nabla \fmu (X^{(j)}),
\end{equation}
where the superscript $(j)$ denotes the iteration index and $\alpha^{(j)}$
is the stepsize at the $j$-th iteration. Different stepsize strategies
lead to different convergence properties. We first consider a fixed
stepsize that is sufficiently small. As shown in
Theorem~\ref{thm:NoLocalMinima}, the weighted trace penalty model
\eqref{eq:WeightedTracePenalty} does not have spurious local minima. We
could then first adopt the idea in
\cite{gaoTriangularizedOrthogonalizationfreeMethod2020} to guarantee that
the iteration never escapes from a big area such that the Lipshitz
constant is bounded. Then by the discrete stable manifold
theorem~\cite{gaoGlobalConvergenceTriangularized2021, Lee2019}, we could
show that the gradient descent method for \eqref{eq:WeightedTracePenalty}
converge to global minima for all initial points besides a set of measure
zero. Consider another choice of stepsize strategy, \ie, random
perturbation of a fixed stepsize. Instead of using discrete stable
manifold theorem, we could apply ideas from stable manifold theorem for
random dynamical systems~\cite{Chen2021b} to show global convergence
almost surely. Wen et al.~\cite{wen2016trace} showed that, if the stepsize
is sufficiently small, not necessarily constant, the iteration variable of
the gradient descent method for the original trace penalty model stays
full-rank. Such a result could be extended to our weighted trace penalty
model as well. Though aforementioned stepsize strategies, in theory, work
well on global convergence. In practice, these stepsize strategies are too
conservative to be numerically efficient.

For most applications, especially the FCI eigenvalue problem we considered
in this paper, a good initialization is available, and, hence, more
aggressive stepsize strategies are adopted in practice. Such stepsize
strategies include but are not limited to exact line search, BB stepsize,
etc. For the weighted trace penalty model, the stepsize $\alpha^{(j)}$ can
be computed by exact line search to make $X^{(j+1)}$ attain local
directional optima in each iteration, which leads to a cubic polynomial of
$\alpha$ as the sub-problem. Numerically, we find that BB stepsize works
better in the gradient descent method for our weighted trace penalty
model. Therefore, we mainly focus on the BB stepsize. Let $\delta_X^{(j)}
\triangleq X^{(j)} - X^{(j-1)}$ and $\delta_G^{(j)} \triangleq \nabla \fmu
(X^{(j)})-\nabla \fmu (X^{(j-1)}).$ The BB stepsize is defined as,
\begin{align*}
    \alpha_{\text{odd}}^{(j)} = \tr\left((\delta_X^{(j)})^\top
    \delta_G^{(j)}\right)\Big/\fnorm{\delta_G^{(j)}}^2,\\
    \alpha_{\text{even}}^{(j)} = \fnorm{\delta_X^{(j)}}^2\Big/
    \tr\left((\delta_X^{(j)})^\top \delta_G^{(j)}\right).
\end{align*}
where the subscripts odd and even mean the iteration number $(j)$ is odd
or even. The BB stepsize requires some extra storage to store intermediate
matrices $\delta_X^{(j)}$ and $\delta_G^{(j)}$ (or their variants), and
the computational cost for the stepsize is $O(np)$. As a comparison, in
the gradient descent method, the dominant per-iteration computational cost
is to compute the matrix-matrix product of $AX$, which costs about
$O(\mathrm{nnz}(A)\cdot p)$ for $\mathrm{nnz}(A)$ denoting the number of
nonzero entries in $A$.

Compared with the original trace-penalty optimization~\cite{wen2016trace},
the only change is the weight matrix. Introducing such a weight matrix
reduces the cardinality of the global minima set from infinite to finite
and makes all global minima isolated from each other, while searching for
the global minima becomes more difficult. This could be seen from the
theoretical condition numbers of the Hessian operator
in~\eqref{eq:ConditionNumber} and that in~\cite{wen2016trace}:
\begin{equation}
    \kappa\left(\hess \fmu\right) \geqslant
    \kappa \left(\hess f_{\mu,I} \Bigr|_{V_p^{\bot}}\right)
    \triangleq \frac{\lambda_n - \lambda_1}{\lambda_{p+1} - \lambda_p}.
\end{equation}
Viewing both optimization methods as eigensolvers, the original
trace-penalty optimization requires an extra Rayleigh-Ritz process,
whereas the weighted trace penalty model converges to desired eigenpairs
directly.

\subsection{Coordinate Descent for FCI}\label{subsec:CD}

The gradient descent method for \eqref{eq:WeightedTracePenalty} works well
for problems of small sizes to moderate sizes. While, for FCI matrices,
the gradient descent method becomes less efficient and, in many cases,
infeasible. In this section, we will introduce the coordinate descent
method to optimize~\eqref{eq:WeightedTracePenalty}.

Recall that an FCI matrix has the following properties:
\begin{itemize}
    \item Extremely large-scale: in practice, the dimension of the FCI
    matrix could easily exceed $O(10^{14})$. This makes the eigenvectors
    impossible to be stored in memory. The FCI matrix itself has to be
    generated on the fly, and we cannot keep the whole matrix in memory
    but calculate one column or row when we use it.

    \item Sparsity: the Hamiltonian operator under the
    second-quantization~\cite{Berezin_1966_MethodSecondQuantization}
    admits 
    \begin{equation*}
        \hat{H} = \sum_{p,q} t_{pq} \hat{a}_p^{\dagger} \hat{a}_q
        + \frac{1}{2}\sum_{p,q,r,s} u_{pqrs} \hat{a}_p^{\dagger}
        \hat{a}_q^{\dagger} \hat{a}_s \hat{a}_r,
    \end{equation*}
    where $\hat{a}_p^{\dagger}$ and $\hat{a}_p$ denote the creation and
    annihilation operators of an electron with spin-orbital index $p$, and
    $t_{pq}$ and $u_{pqrs}$ are one- and two-electron integrals, the
    $(i,j)$-th element of FCI matrices is nonzero if and only if the
    $i$-th basis wavefunction and the $j$-th basis wavefunction differ in
    at most two occupied
    spin-orbitals~\cite{Condon_1930_TheoryComplexSpectra,
    Slater_1929_TheoryComplexSpectra}. Thus, the number of nonzero
    elements grows polynomially with respect to the number of particles
    whereas the FCI matrix size grows factorially.

    \item Approximately sparse eigenvectors: the eigenvectors associated
    with low-lying eigenvalues of FCI matrices usually are sparse. The
    magnitudes of different entries vary widely, ranging from $10^{-16}$
    to $10^{-1}$ in normalized eigenvectors. Only a few dominant entries
    account for nearly all the norms of eigenvectors. In practice, we
    approximate these eigenvectors by sparse vectors and focus on those
    dominant entries.
\end{itemize}
Taking all these properties into account, for symmetric FCI matrix $A$,
the gradient descent method is not feasible: the matrix $A$ and iteration
variable $X$ cannot be hosted in memory; and, computing $AX$ each
iteration is not affordable. CDFCI~\cite{wangCoordinateDescentFull2019a},
solving the leading eigenpair of the FCI problem, inspires us that the
coordinate descent method would be an efficient algorithm to locate the
sparse entries and calculate the values.

When a coordinate descent method is considered, the per-iteration updating
scheme for \eqref{eq:WeightedTracePenalty} is of the form 
\begin{subequations} \label{eq:CDForm}
\begin{align}
    & \text{Pick a coordinate from $X^{(j)}$, \ie, } (k^{(j)},\ell),
    \label{eq:CDFormPick}\\
    & \alpha^{(j)} = \argmin_{\alpha \in \bbR}
    \fmu \left(X^{(j)} + \alpha E_{k^{(j)}\ell}\right),
    \label{eq:CDFormStepsize}\\
    & X^{(j+1)} = X^{(j)} + \alpha^{(j)} E_{k^{(j)}\ell},
\end{align}
\end{subequations}
where $E_{k^{(j)}\ell} \in \bbR^{n \times p}$ denotes the matrix whose
$(k^{(j)},\ell)$-th element is one and zero elsewhere. The optimization
problem in \eqref{eq:CDFormStepsize} is a fourth order polynomial of
$\alpha$, whose minimizers can be obtained via solving a cubic polynomial
directly. The cubic polynomial is of form 
\begin{equation*}
    (\alpha + x_{k \ell})^3 + c_1 (\alpha + x_{k\ell}) + c_0 = 0.
\end{equation*}
where the coefficients are 
\begin{equation}\label{eq:CoefCubicEq}
    \begin{split}
        c_1 & = \frac{1}{\mu} a_{kk} - w_\ell
        + \sum_{m=1}^n (x_{m\ell})^2 
        + \sum_{m=1}^p (x_{km})^2 - 2(x_{k\ell})^2,\\
        c_0 &= \frac{1}{\mu} \sum_{m=1}^n
        a_{km}x_{m\ell} - \frac{a_{kk}x_{k\ell}}{\mu}
        + \sum_{m=1}^n \sum_{s=1}^p x_{ks}x_{ms}x_{m\ell}\\
        &\quad + x_{k\ell}^3 - x_{k\ell}\Bigl(\sum_{s=1}^p (x_{ks})^2
        + \sum_{m=1}^n (x_{m\ell})^2\Bigr).
    \end{split}
\end{equation}
Since all variables in the above two equations are at $j$-th iteration, we
drop the iteration index superscript for all variables. As we shall see in
the later Algorithm~\ref{alg:CD}, we maintain $Y = AX$ and $S=X^\top X$
throughout iterations. Hence both coefficients can be computed in $O(p)$
operations and then the exact line search for stepsize $\alpha^{(j)}$ can
be calculated efficiently. 

Our coordinate picking strategy, \eqref{eq:CDFormPick}, is inspired by
CDFCI~\cite{wangCoordinateDescentFull2019a}, which depends on the gradient
and the nonzero pattern of $A$. In the $(j)$-th iteration, we focus on the
$\ell$-th column for $\ell \equiv j (\mathrm{mod}\, p)$. We search for the
entry with largest magnitude of the $\ell$-th column of $\nabla \fmu
(X^{(j)})$ among the nonzero pattern of $k^{(j-p)}$-th column of $A$,
where $k^{(j-p)}$ is the row coordinate updated in $(j-p)$-th iteration.
That is 
\begin{equation} \label{eq:PickingRule}
    k^{(j)} = \argmax_{i \in \calN \left(A_{:,k^{(j-p)}}\right)}
    \left\lvert \left( \nabla \fmu(X^{(j)}) \right)_{i \ell} \right\rvert,
\end{equation}
where $\calN(\cdot)$ denotes the nonzero pattern, $A_{:,k^{(j-p)}}$ denotes
the $k^{(j-p)}$-th column of $A$ and $\left(\nabla \fmu
(X^{(j)})\right)_{i\ell}$ denotes the $(i,\ell)$-th element of $\nabla
\fmu(X^{(j)})$.

According to the expression of $\nabla \fmu$ as in
\eqref{eq:FirstOrderCondition}, though only a small set of entries is
needed, computing them every iteration is not affordable. Thanks to an
important feature of the coordinate descent method, \ie, a single entry is
updated per-iteration, we could efficiently maintain two important
quantities: $Y^{(j)} \approx AX^{(j)}$ and $S^{(j)} =
\left(X^{(j)}\right)^\top X^{(j)}$. Similar to
CDFCI~\cite{wangCoordinateDescentFull2019a}, we maintain $Y^{(j)}$ as a
compressed approximation of $AX^{(j)}$. The updating combined with
compression formula is,
\begin{equation}\label{eq:UpdateAX}
    Y^{(j+1)}_{i \ell} =
    \begin{cases}
        Y^{(j)}_{i \ell} + \alpha^{(j)} A_{ik^{(j)}}
        & \text{if } \lvert \alpha^{(j)} A_{ik^{(j)}} \rvert > \varepsilon
        \text{ or } Y_{i\ell}^{(j)} \neq 0 \\
        Y^{(j)}_{i \ell} & \text{otherwise}
    \end{cases},
\end{equation}
for $i \in \calN \left(A_{:,k^{(j)}}\right)$. Besides, in order to get a stable
estimation of corresponding eigenvalues with high accuracy, we need to
recalculate the element $Y^{(j+1)}_{k^{(j)}\ell}$ exactly by
$Y^{(j+1)}_{k^{(j)}\ell} = (A_{:,k^{(j)}})^\top X_{:,\ell}$, since the
Rayleigh quotient has the updating scheme
\begin{equation}
    \label{eq:UpdateXAX}
    \left(X^\top A X\right)^{(j+1)}_{\ell\ell} = \left(X^\top A
    X\right)^{(j)}_{\ell\ell} + 2 \alpha^{(j)} Y^{(j+1)}_{k^{(j)}\ell}
    - (\alpha^{(j)})^2 A_{k^{(j)}k^{(j)}}.
\end{equation}
Due to the symmetry of $S^{(j)}$, only the upper-triangular part is stored
and updated, and the updating expression for $S^{(j)}$ is,
\begin{equation} \label{eq:UpdateXX}
    S_{im}^{(j+1)} = 
    \begin{cases}
        S_{im}^{(j)} + \alpha^{(j)} X_{k^{(j)}i}^{(j)}
        &\mathrm{if}\; i < \ell, m = \ell\\
        S_{im}^{(j)} + 2 \alpha^{(j)} X_{k^{(j)}i}^{(j)} + (\alpha^{(j)})^2
        &\mathrm{if}\; i = \ell, m = \ell\\
        S_{im}^{(j)} + \alpha^{(j)} X_{k^{(j)}m}^{(j)}
        &\mathrm{if}\; i = \ell, m > \ell\\
        S_{im}^{(j)}
        &\mathrm{otherwise}
    \end{cases}.
\end{equation}
The compression strategy of $Y^{(j)}$ restricts the increase of the number
of nonzero elements of both $Y$ and $X$. Compressing coordinates is very
much desired, which saves a significant amount of memory.
Algorithm~\ref{alg:CD} illustrates the framework of the algorithm.

\begin{algorithm}[htp]
    \caption{\WTPM{} by Coordinate Descent (\WTPM{}-CD)}\label{alg:CD}
    \begin{algorithmic}[1]
        \STATE Initialize $X^{(0)} \in \bbR^{n\times p}$, 
        penalty parameter $\mu$ and weight matrix $W$. 
        
        \STATE Store matrices $Y^{(0)} = AX^{(0)}$ and $S^{(0)} =
        (X^{(0)})^\top X^{(0)}$.
        
        \STATE Store $p$-dimensional vector
        $d^{(0)} = \textrm{the diagonal of } (X^{(0)})^\top Y^{(0)}$.
        
        \STATE Construct $\nabla \fmu(X^{(0)})$. Set $j=0$.

        \WHILE{stopping criterion not achieved}
            \STATE Pick the coordinate
            $(k^{(j)},\ell)$ to be updated in $(j+1)$-th iteration by
            \textbf{picking rule}~\eqref{eq:PickingRule}.

            \STATE Compute the coefficients $c_0, c_1$
            by~\eqref{eq:CoefCubicEq} and obtain the increment
            $\alpha^{(j)}$.

            \STATE $X^{(j+1)} = X^{(j)} +
            \alpha^{(j)} E_{{k^{(j)}}\ell}$.

            \STATE Update $Y^{(j+1)}$ by~\eqref{eq:UpdateAX}.

            \STATE Update $d^{(j+1)}$ by~\eqref{eq:UpdateXAX}.

            \STATE Update $S^{(j+1)}$ by~\eqref{eq:UpdateXX}.

            \STATE Construct the searching domain in $\nabla \fmu
            (X^{(j+1)})$ dependent on $\calN (A_{:,k^{(j)}})$.

            \STATE $j \leftarrow j+1$.

        \ENDWHILE
    \end{algorithmic}
\end{algorithm}

A good choice of initial point would make iterative methods efficient. For
FCI problems, Hartree-Fock provides excellent initial values for ground
states and a few low-lying excited states. We lack a systematic way of
choosing good initial vectors for other excited states. In principle, the
initial $X^{(0)}$ must be an extremely sparse matrix so that $Y^{(0)} =
AX^{(0)}$ could be calculated in a reasonable amount of time. In
particular, we adopt the following initialization for our numerical
results. We find the $p$ smallest elements and corresponding indices
$\{i_1,i_2,\dotsc,i_p\}$ in the diagonal of the FCI matrix. The initial
point $X^{(0)}$ is set to be $(e_{i_1}, e_{i_2},\dotsc, e_{i_p})$ where
$e_i$ denotes the $i$-th column of the identity matrix.

As for stopping criterion, in general, we use the residual norm $\fnorm{AX
- X\Lambda}$ where $\Lambda$ consists of the Ritz values or Rayleigh
quotients, or use the gradient norm $\fnorm{\nabla \fmu}$ to compare with
the tolerance $tol$. However, in these methods, high computational cost
arises from the extremely large size of $A$. The matrices $AX$ and $\nabla
\fmu$ gathered during the iteration are not exact due to inadequate update
of $Y^{(j)}$ and $\nabla \fmu (X^{(j)})$. Accordingly, we introduce the
summation of historical absolute increments as the stopping criterion at
$j$-th iteration, \ie,
\begin{equation}
    \label{eq:StoppingCriterion}
    \sum_{i=0}^h \gamma^i \abs{\alpha^{(j-i)}} < tol,
\end{equation}
where $h$ is a positive integer and $\gamma \in (0,1)$. Usually, we choose
$h = 100$ and $\gamma = 0.99$.

In $j$-th iteration, the cost of updating $X_{k^{(j)}\ell}$ dominantly
includes, selecting the index of largest elements $(k^{(j)},\ell)$ with
$O(\mathrm{nnz}(A_{:,k^{(j-p)}}))$ flops, updating $Y^{(j)}$ with
$O(\mathrm{nnz}(A_{:,k^{(j)}}))$ flops and inadequately constructing
$\nabla \fmu(X^{(j)})$ with $O(\mathrm{nnz}(A_{:,k^{(j)}}))$ flops. Thus
the coordinate descent method makes the computational cost affordable in
each iteration. Another advantage of the algorithm is the utilization of
memory. It only requires us to store some sparse matrices like $X^{(j)}$,
$Y^{(j)}$, a $p\times p$ matrix $S^{(j)}$ and incomplete $\nabla
\fmu(X^{(j)})$ in memory. 

In theory, the coordinate descent method converges faster than the full
gradient descent method. While, due to the fact that modern computer
architecture prefers batch operations, \ie, contiguous memory operations,
the full gradient descent method often outperforms the coordinate descent
method in runtime. However, the intrinsic structure of the FCI matrix
benefits most from the coordinate-wise method. The gradient descent method
has to access the sparse matrix and the related entries in $X$, which
destroys the contiguous memory access. On the other hand side, the
coordinate descent method allows us to compress coordinates and restrict
the cost of memory. Furthermore, the updating strategy provides more
chances for dominant elements to achieve their optimal values. Detailed
numerical results are provided in the next section.

\section{Numerical Experiments}
\label{sec:numericalresult}

In this section, we will test the performance of our algorithms for
computing a set of smallest eigenpairs of Hamiltonian matrices.

\subsection{Performance in Small Systems}

This section will discuss the performance of applying \WTPM{} to some
small systems and compare it with other eigensolvers. The FCI matrices are
illustrated in Table~\ref{tab:TestMatrix}. There are two matrices:
``ham448'' and ``h2o''. ``h2o'' matrix is generated from one \ch{H2O}
molecule system with STO-3G basis set and ``ham448'' matrix is generated
by the Hubbard model on a $4\times 4$ grid with $8$ fermions. The
dimension $n$ ranges from $6\times 10^4$ to $2\times 10^5$ which is much
smaller than the dimension of the systems of practical interest. That is
because we want to reveal the feature of \WTPM{} compared with the other
classical solvers. The average nnz$(A_{:,j})$ shows the number of nonzero
elements of each column in average, which roughly estimates the
computational expense of $O(\mathrm{nnz}(A_{:,j}))$ flops in each
iteration by \WTPM{}-CD. These numerical experiments on testing matrices
are performed in MATLAB R2021b.
\begin{table}[htp]
    \centering
    \caption{Testing FCI matrices.}\label{tab:TestMatrix}
    \begin{tabular}{ccccc}
    \hline
    \textbf{Name} & $n$ & nnz$(A)$ & average nnz$(A_{:,j})$& $\mathrm{nnz}(A)/n^2$\\ 
    \hline
    ham448 & 207168 & 32040806 & 155 &7.47e-4\\
    h2o & 61441 & 25060625 & 408 &6.64e-3\\
    \hline
    \end{tabular}
\end{table}

There still exists a problem how the $\mu$ and $W$ are determined in
practice. We present a feasible approach to choosing the proper parameters
based on roughly estimated eigenvalues. Since we usually could get a good
initial $X^{(0)}$ due to Hartree-Fock theory and this initial $X^{(0)}$
has only one nonzero element in each column, matrix $(X^{(0)})^\top  A
X^{(0)}$ with corresponding Rayleigh quotients $r_1 \leqslant r_2
\leqslant \dotsb \leqslant r_p$ can be computed cheaply. Just let $\mu =
1$ and $W$ is distributed evenly in the interval $[w_p,w_1]$ such that
\begin{equation}\label{eq:IntervalOfW}
    \begin{split}
        w_p & = r_p + \varepsilon,\\
        w_1 & = 2w_p - r_1,
    \end{split}
\end{equation}
where $\varepsilon$ is a positive parameter depending on both the
magnitude and the gap of initial Rayleigh quotients.

First, we demonstrate that the smallest eigenvectors are
approximately sparse in Figure~\ref{fig:SortEigVec}. In the accurate
eigenvectors, the magnitude of coordinates decreases quickly and the
vector norm is dominantly distributed on a few coordinates. It
implies that in the process of updating we could reserve the coordinates
whose magnitude is larger than a threshold and cut out others despite the
loss of accuracy. Figure~\ref{fig:SortEigVec} also shows the compression
threshold we used in the experiments.

\begin{figure}[htp]
    \centering 
    \subfloat[h2o]{\includegraphics[width=0.48\textwidth]{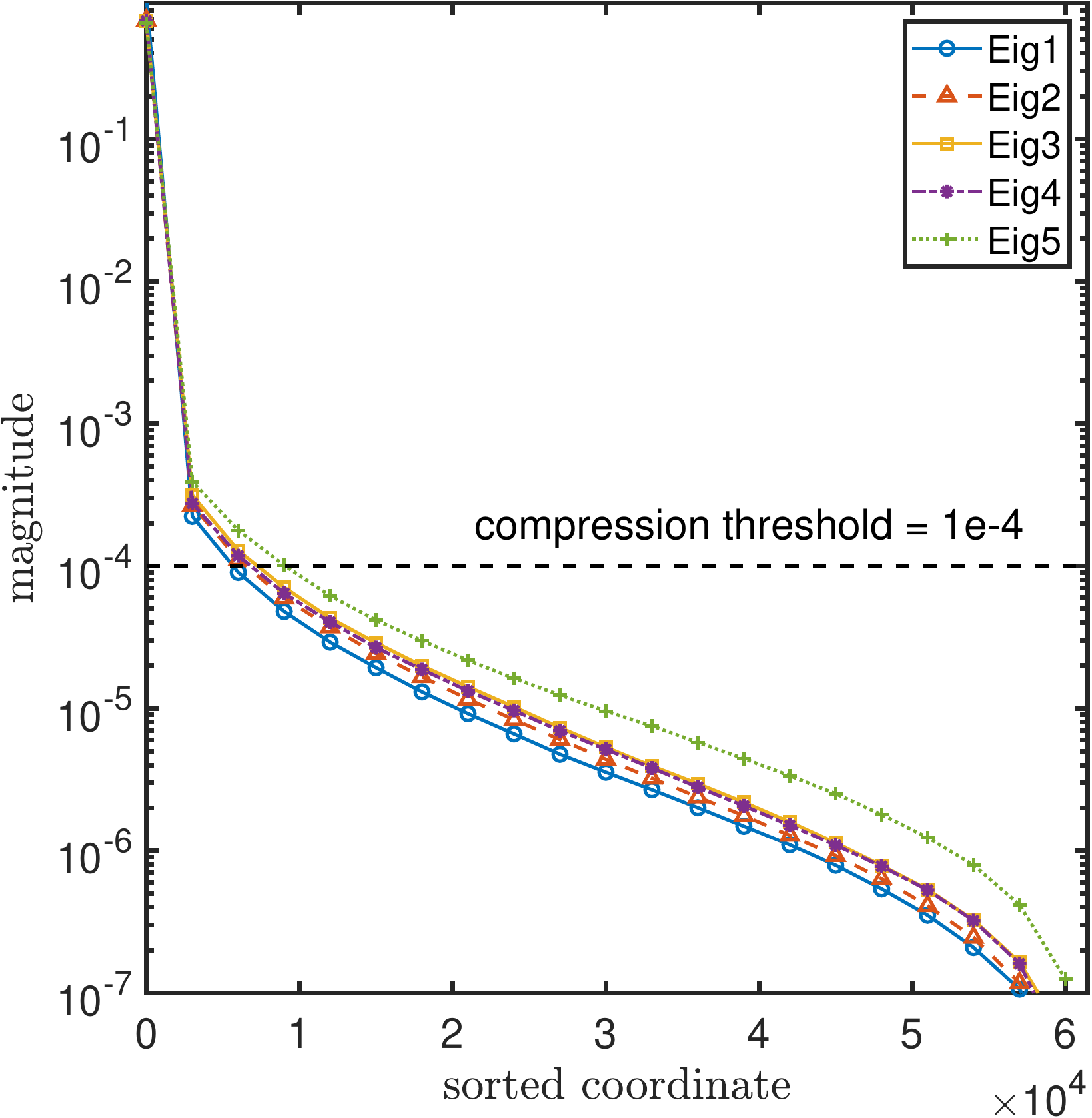}\label{fig:SortEigVec:a}}
    \quad
    \subfloat[ham448]{\includegraphics[width=0.48\textwidth]{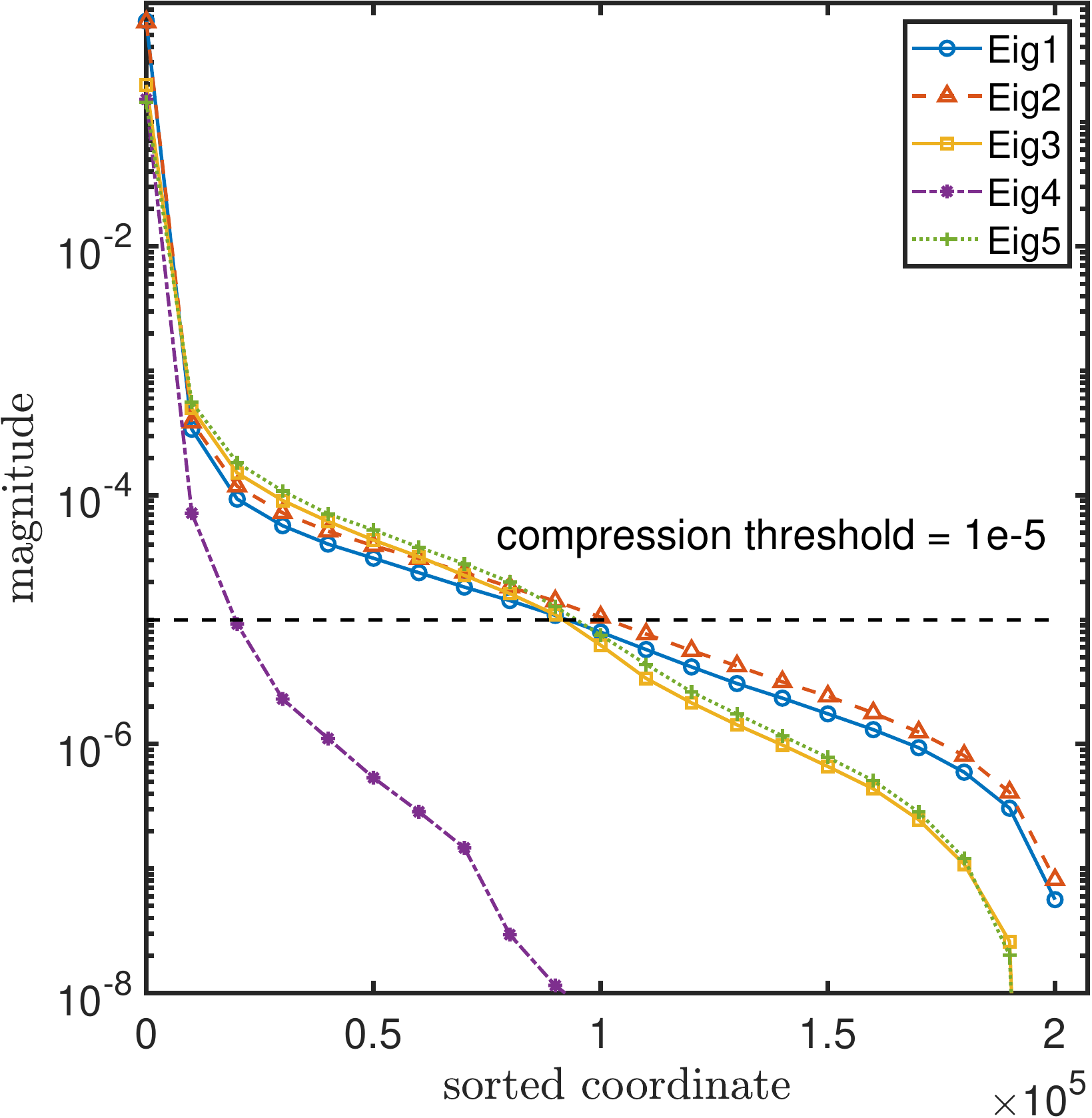}\label{fig:SortEigVec:b}}
    \caption{Absolute value of each coordinate in the sorted eigenvectors. 
    ``\textnormal{Eig}$k$'' denotes the $k$-th smallest eigenvector.}
    \label{fig:SortEigVec}
\end{figure}

We apply different eigensolvers to both FCI matrices, including LOBPCG in
BLOPEX toolbox~\cite{knyazev2001toward, Knyazev_2007_BlockLocallyOptimal},
EigPen-B introduced in trace-penalty minimization~\cite{wen2016trace},
and our \WTPM{} by both gradient descent (\WTPM{}-GD) and coordinate
descent (\WTPM{}-CD) methods. The numerical results for the case $p=5$ are
illustrated in Table~\ref{tab:ComparisonIters}. All these solvers use the
same initial matrix $X^{(0)}$ which is provided by Hartree--Fock theory,
and terminate when the error decreases to around $10^{-3}$. Let
\begin{equation}
    \label{eq:ErrorDefine}
    err^{(j)} = \max_{\ell = 1,2,\dotsc,p} \abs{\lambda_\ell - d_\ell^{(j)}},
\end{equation}
where $\lambda_\ell$, $d_\ell^{(j)}$ respectively denote the exact
eigenvalue and the corresponding Ritz value at $j$-th iteration. As for
the individual settings of each solver, in LOBPCG, the preconditioner is
not used. The penalty parameter in EigPen-B is set as 
\begin{equation}
    \mu = \max(r_p , 1).
\end{equation}
The weight matrix $W$ in \WTPM{} is set as the above statement
in~\eqref{eq:IntervalOfW} and $\mu = 1$.

Another thing we should emphasize is the computational complexity of each
solver in one iteration. In LOBPCG, EigPen-B and \WTPM{}-GD, the dominant
cost is $2p\cdot\mathrm{nnz}(A)$ flops to obtain matrix $AX$. But
\WTPM{}-CD updates each iteration mainly at the expense of
$2(p+2)\cdot\mathrm{nnz}(A_{:,\ell})$ flops, consisting of selecting
largest elements index, updating $Y$ and inadequately constructing $\nabla
\fmu$. That means, on average, the complexity of updating
$\tfrac{np}{p+2}$ times in \WTPM{}-CD equals that of updating once in the
other three solvers. Hence, we introduce ``\textbf{relative iteration}''
in \WTPM{}-CD, which means $\tfrac{np}{p+2}$ iterations and the actual
iteration number is shown in brackets in Table~\ref{tab:ComparisonIters}.

\begin{table}[htp]
    \caption{A comparison of updating iterations between eigensolvers for
    $p = 5$. The numbers in the ``\WTPM{}-CD'' row represent
    \textbf{relative iteration} outside the bracket and \textbf{actual
    iteration} in the bracket.}
    \label{tab:ComparisonIters}
    \centering
    \begin{tabular}{ccclcl}
        \toprule
        &  & \multicolumn{2}{c}{h2o} & \multicolumn{2}{c}{ham448} \\
        \cmidrule(lr){3-6} 
        &  & $err$ & iteration & $err$ & iteration \\
        \toprule
        LOBPCG &  & 8.676e-4 & 18 & 9.929e-4 & 68 \\
        EigPen-B &  & 2.971e-4 & 73 & 6.19e-4 & 134 \\
        WTPM-GD &  & 4.879e-4 & 185 & 3.68e-4 & 191 \\
        WTPM-CD &  & 3.901e-4 & \textbf{7} (283111) & 7.29e-4 & \textbf{4}
        (462000) \\
        \bottomrule
    \end{tabular}
\end{table}

The results in Table~\ref{tab:ComparisonIters} tell us that under the
requirement of $10^{-3}$ accuracy, \WTPM{}-CD shows the efficiency applied
to FCI matrices, since its theoretical complexity is much lower than the
others, though the coordinate method cannot take the advantage of level-3
BLAS operations as the others and its actual runtime is longer on small
systems. Instead, \WTPM{}-GD seems not an optimal choice due to the slow
convergence compared with EigPen-B and no compression of the coordinates.
That is because \WTPM{}'s theoretical condition number is larger than the
original trace-penalty minimization model if the elements of weight
matrix $W$ differ from each other.

Furthermore, Figure~\ref{fig:ErrorNnz} shows the convergence of the
eigenvalues in different eigensolvers and the number of nonzero elements
of the matrix $Y^{(j)}$ in \WTPM{}-CD varying against iteration. In LOBPCG
and \WTPM{}-CD, the error monotonically decreases to the specified
tolerance, but there exist spikes on the curve of error varying in
EigPen-B. That is because the BB stepsize cannot guarantee monotonicity
during minimization. The increasing tendency of $\mathrm{nnz}(Y^{(j)})$ in
Figure~\ref{fig:ErrorNnz} shows the effects of the coordinate descent
method and compression update. The $\mathrm{nnz}(Y^{(j)})$ is restricted
at a low level and so is the $\mathrm{nnz}(X^{(j)})$ because of the
inequality $\mathrm{nnz}(Y^{(j)}) \geqslant \mathrm{nnz}(X^{(j)})$ in
\WTPM{}-CD, which could significantly reduce the burden of the memory
source. This is what the other eigensolvers cannot do since the matrix
multiplication without compression will result in a dense matrix $AX$.

\begin{figure}[htp]
    \centering
    \subfloat[h2o]{\includegraphics[width=0.48\textwidth]{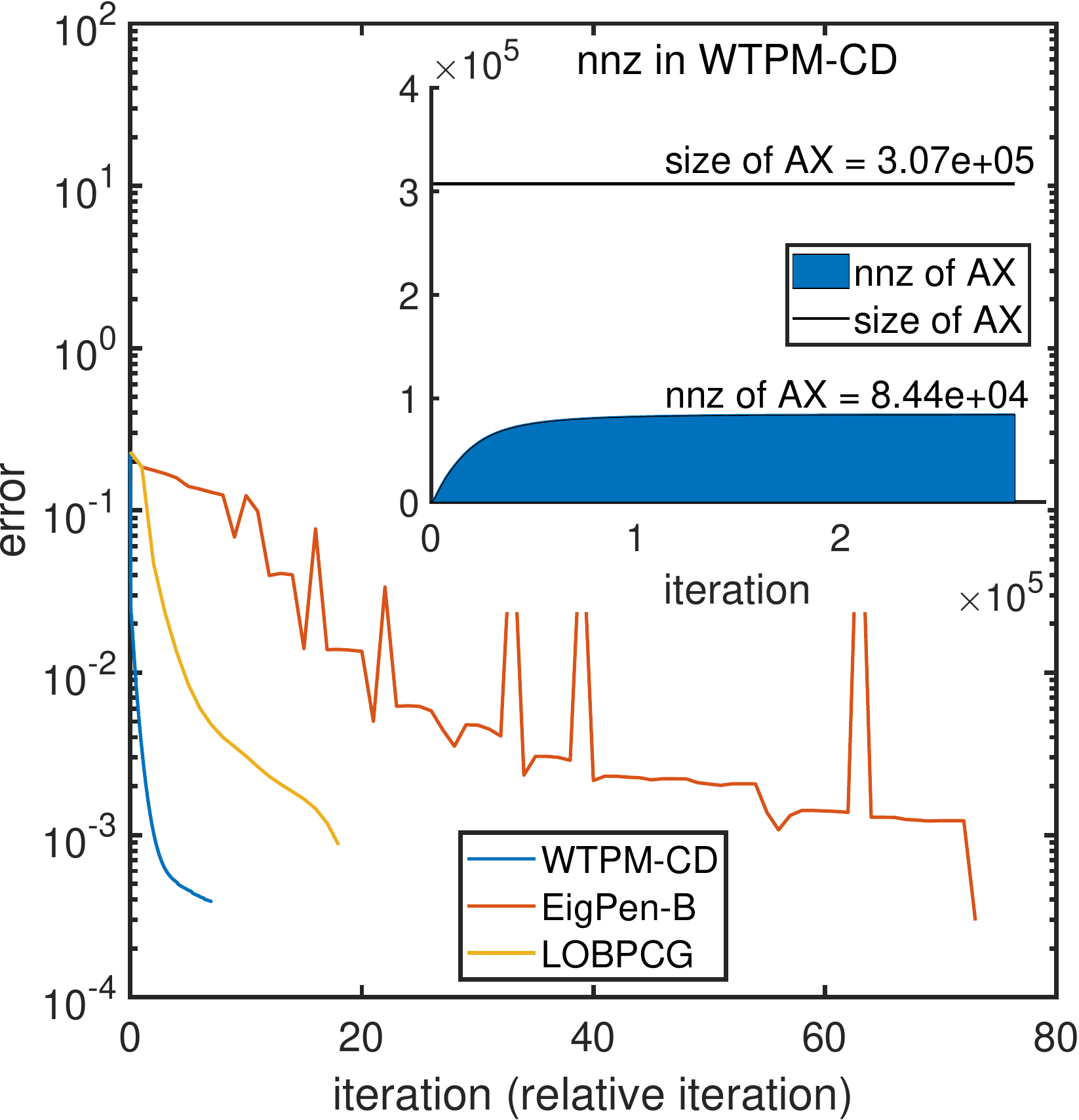}\label{fig:ErrorNnz:a}}
    \quad
    \subfloat[ham448]{\includegraphics[width=0.48\textwidth]{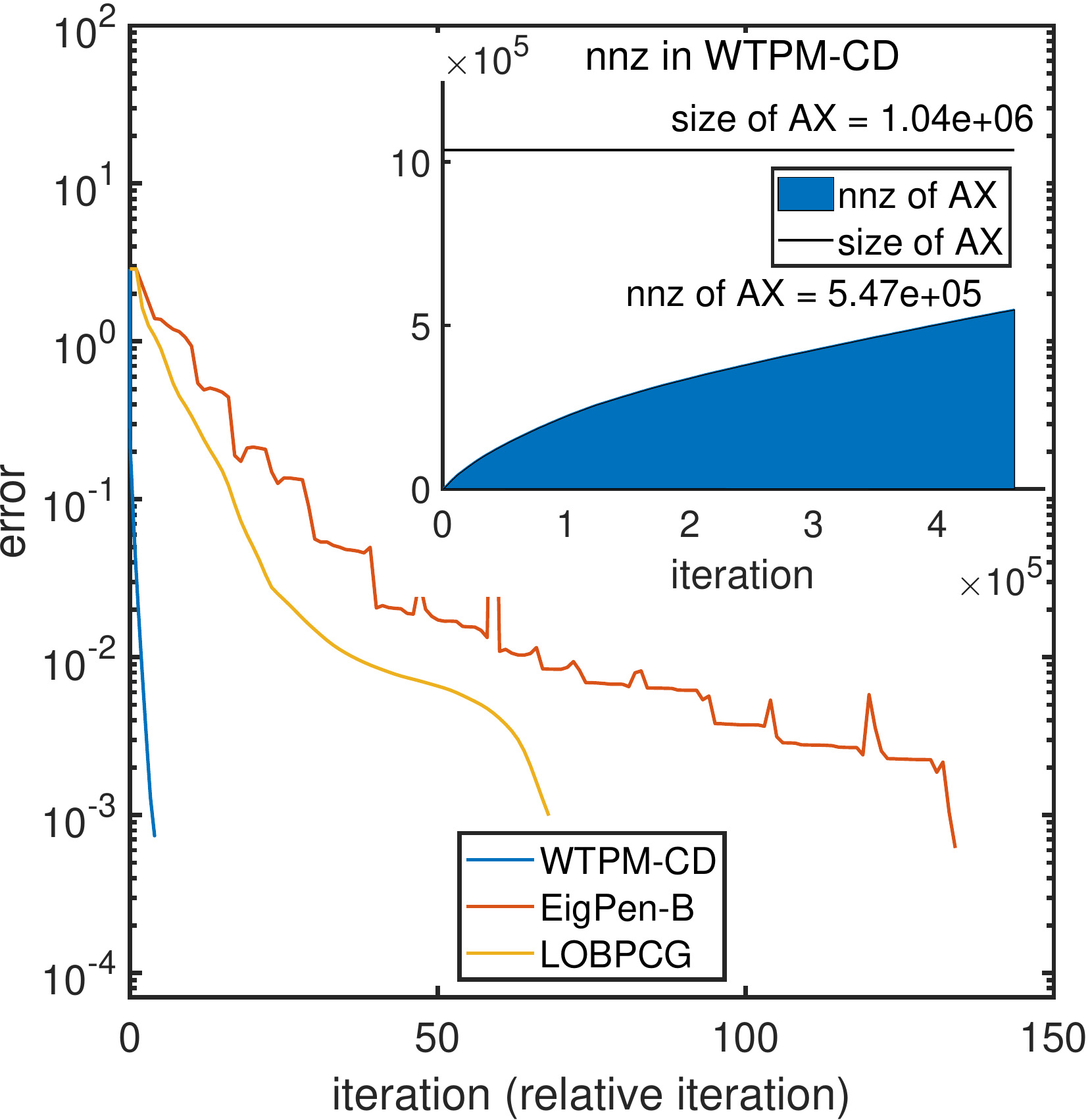}\label{fig:ErrorNnz:b}}
    \caption{Convergence of the smallest eigenvalues and number of nonzero
    elements (nnz) of the corresponding eigenvectors against iteration for
    $p=5$. The nnz of $AX$ in LOBPCG and EigPen-B are not shown here since
    it quickly increases to the maximum size of $AX$.}
    \label{fig:ErrorNnz}
\end{figure}

Besides, we apply \WTPM{}-CD with different weight matrices
$W$ to the ``h2o'' case to show the impact of $W$ on the convergence as
been analyzed in Section~\ref{sec:optimalW}. We use the exact eigenvalues
of $A$ to generate three different weight matrices. Let $w_1 =
\tfrac{\lambda_1 + \lambda_n}{2}$ and $w_p = \tfrac{\lambda_p +
\lambda_{p+1}}{2}$. We set
\begin{align*}
    \widetilde{W}:&\quad w_i = w_1 - (w_1 - w_p)\cdot \frac{i-1}{p-1}, \\
    \widehat{W}:&\quad w_i - w_{i+1} = \left(
        \sum_{j=1}^{p-1} (\lambda_{j+1}-\lambda_j)^{-1}
        \right)^{-1} \frac{w_1-w_p}{\lambda_{i+1}-\lambda_i}, \\
    \text{Random :}&\quad w_i \text{ is uniformly distributed in the interval } (w_p, w_1), 
\end{align*}
for $i = 2, 3, \dotsc, p-1$. Figure~\ref{fig:DiffWeight} shows the
convergence results for different weight matrices. It follows our
conclusion in Section~\ref{sec:optimalW} that $\widehat{W}$ is the best
choice among the three weight matrices and $\widetilde{W}$ makes the
convergence at least faster than the random choice does. In practice, if
$\widehat{W}$ cannot be obtained \textit{a priori} due to the cost of the
construction, $\widetilde{W}$ is often found efficient enough for
\WTPM{}-CD.

\begin{figure}[htp]
    \centering
    \includegraphics[width=0.5\textwidth]{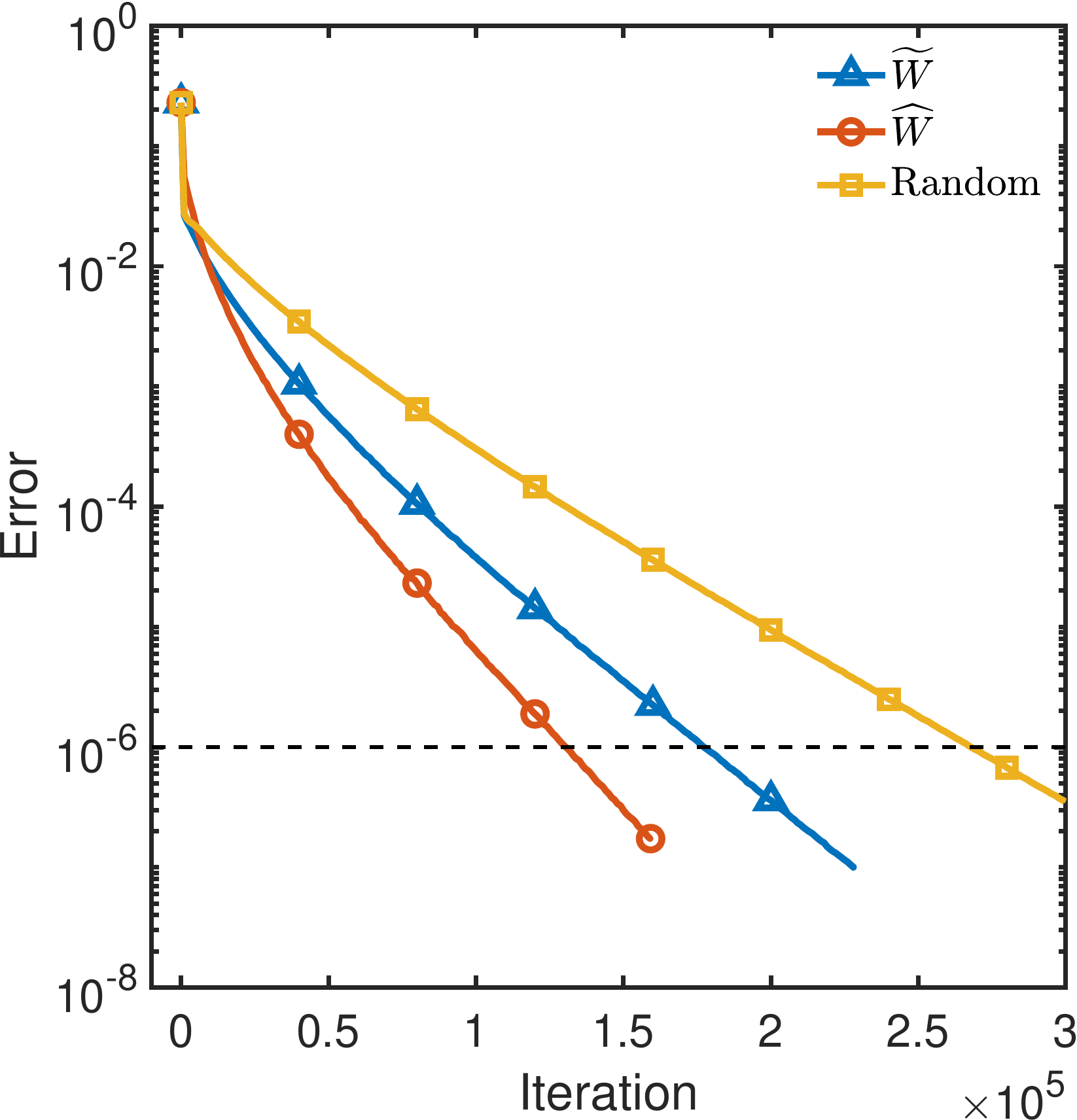}
    \caption{Convergence of the smallest eigenvalues against iteration for
    $p=5$ while \WTPM{}-CD applied to ``h2o'' matrix with different weight
    matrices.}
    \label{fig:DiffWeight}
\end{figure}

In summary, these examples indicate that \WTPM{}-CD could efficiently
solve the eigenvalue problem~\eqref{eq:eigenproblem} for FCI systems, and
provides an outstanding result for practical systems. In the next section,
we will illustrate the performance of \WTPM{}-CD in some larger FCI
matrices of more practical interest. 

\subsection{Performance in Large Systems}

This section provides some tests of larger FCI matrices by \WTPM{}-CD. All
programs are implemented in C++14 and compiled by Intel compiler 2021.5.0
with \texttt{-O3} option. MPI and OpenMP support is disabled for all
programs in this section. All of the tests in this section are produced on
a machine with Intel(R) Xeon(R) Gold 6226R CPU at 2.90 GHz and 1 TB
memory. The basic properties of matrices are illustrated in
Table~\ref{tab:LargeMatrix}. Two matrices correspond to \ch{H2O} and
\ch{C2} molecules generated via restricted Hartree--Fock (RHF) in PSI4
package \cite{Parrish_2017_Psi4OpensourceElectronic} with ccpVDZ atomic
orbital basis set.

\begin{table}[htp]
    \caption{Properties of Testing Molecule Systems.}
    \label{tab:LargeMatrix}
    \centering
    \begin{tabular}{cccc}
        \toprule
        Name & num of electrons & num of orbitals & matrix dimension \\
        \toprule
        h2o\_ccpvdz & 10 & 24 & $4.53\times 10^8$ \\ 
        c2\_ccpvdz & 12 & 28 & $1.77\times 10^{10}$ \\
        \bottomrule
    \end{tabular}
\end{table}

In the experiments, we use the $err^{(j)}$ defined
in~\eqref{eq:ErrorDefine} to measure the accuracy. Since we do not have
the exact energies of excited states, we use the energies obtained by our
algorithm without the compression threshold after a long enough time until
the first 7 digits to the right of the decimal remain unchanged as the
benchmark. The penalty parameter is simply set as $\mu = 1$, and the
weight matrix $W$ follows the rules in~\eqref{eq:IntervalOfW}. Thus we
obtain 
\begin{gather}
    W_{h2o} = \diag(-75.0, -75.175, -75.35), \label{eq:WH2O}\\
    W_{c2} = \diag(-75.138, -75.238, -75.338). \label{eq:WC2}
\end{gather}
And in the ``h2o\_ccpvdz'' case, the compression threshold is $1\times
10^{-6}$, and that in the ``c2\_ccpvdz'' case is $3\times 10^{-8}$.
Table~\ref{tab:ConvergenceLargeMatrix} and
Figure~\ref{fig:ConvergenceLargeMatrix} show the convergence of \WTPM{}-CD. 
The ``GS'', ``1st ES'' and ``2nd ES''
respectively denote the computed energies of the ground state, 1st excited
state and 2nd excited state. The ``time'' column denotes the time of the
computed energies first reaching the appointed precision, \ie, the runtime
when $err^{(j)} \leqslant tol$. This practical runtime shows the
efficiency of the \WTPM{}-CD on such molecules discretized by FCI.

\begin{table}[htp]
    \caption{Convergence of \WTPM{}-CD.}
    \label{tab:ConvergenceLargeMatrix}
    \centering
    \begin{tabular}{cccccccc}
        \toprule
        &   &   & \multicolumn{3}{c}{energy} &  &   \\
        \cmidrule(lr){4-6}
        Matrix & $p$ & $tol$ & GS & 1st ES & 2nd ES & time (s) &
        $\mathrm{nnz}(Y)$ \\
        \toprule
        h2o\_ccpvdz & 3 & 1.0e-2 & -76.24141  &  -75.88563  &  -75.86844 & 378 & 5.10e07 \\
         &  & 1.0e-3 & -76.24182 &  -75.89341 &  -75.86120 & 2179 & 5.16e07 \\
         &  & 1.0e-4 & -76.24186 &  -75.89430 &  -75.86050 & 8653 &
         5.16e07 \\
        \toprule
        c2\_ccpvdz & 3 & 1.0e-2 & -75.72888 & -75.63648 & -75.63609 & 560 & 3.03e08 \\
         &  & 1.0e-3 &  -75.73193  &  -75.64174  &   -75.63398   & 34248 & 1.26e09 \\
         &  & 1.0e-4 & -75.73196 & -75.64250 & -75.63327 & 102503 &
         1.27e09 \\
        \bottomrule
    \end{tabular}
\end{table}

\begin{figure}[htp]
    \centering
    \subfloat[h2o\_ccpvdz]{\includegraphics[width=0.48\textwidth]{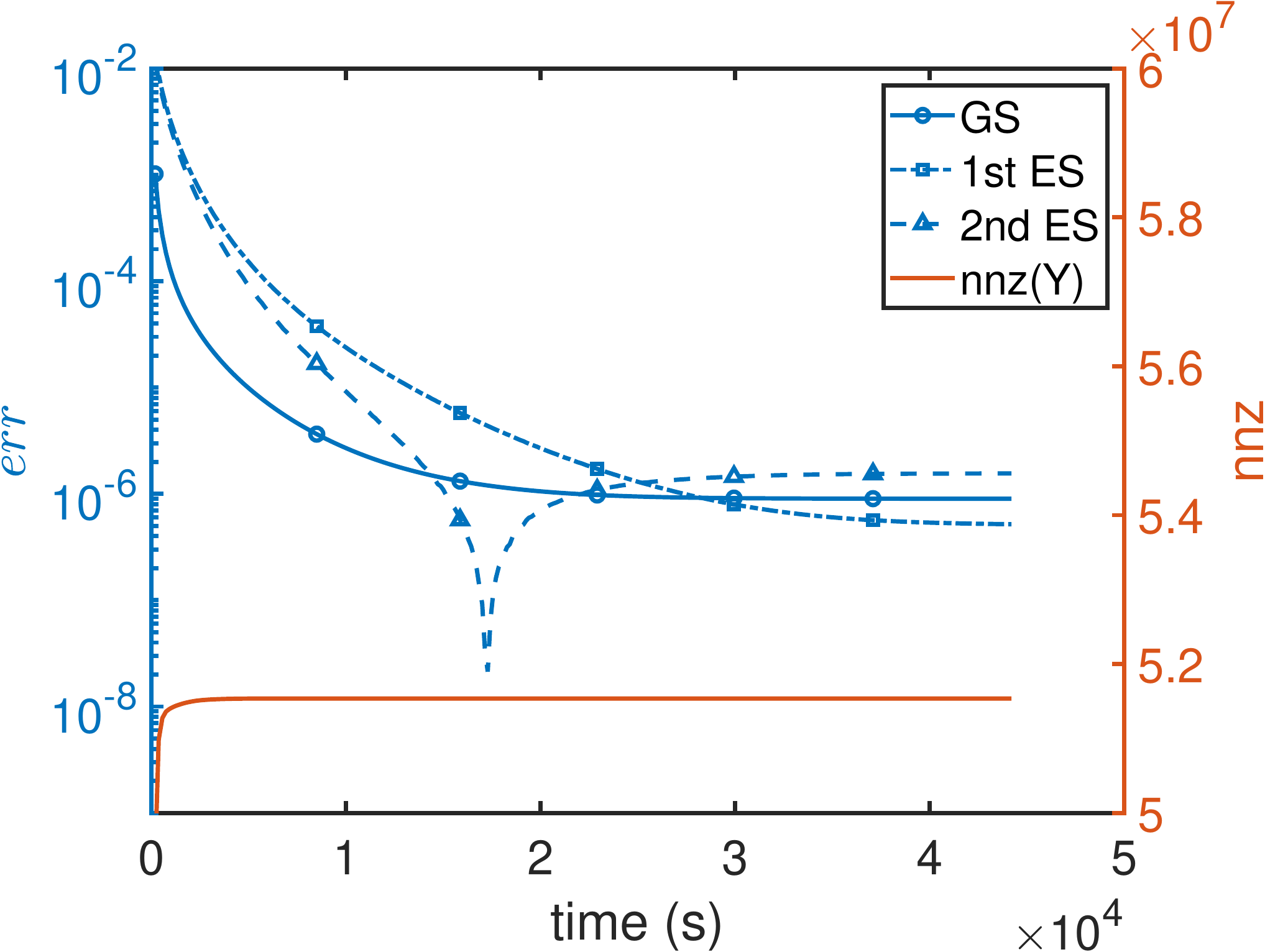}\label{fig:ConvergenceLargeMatrix:a}}
    \quad
    \subfloat[c2\_ccpvdz]{\includegraphics[width=0.48\textwidth]{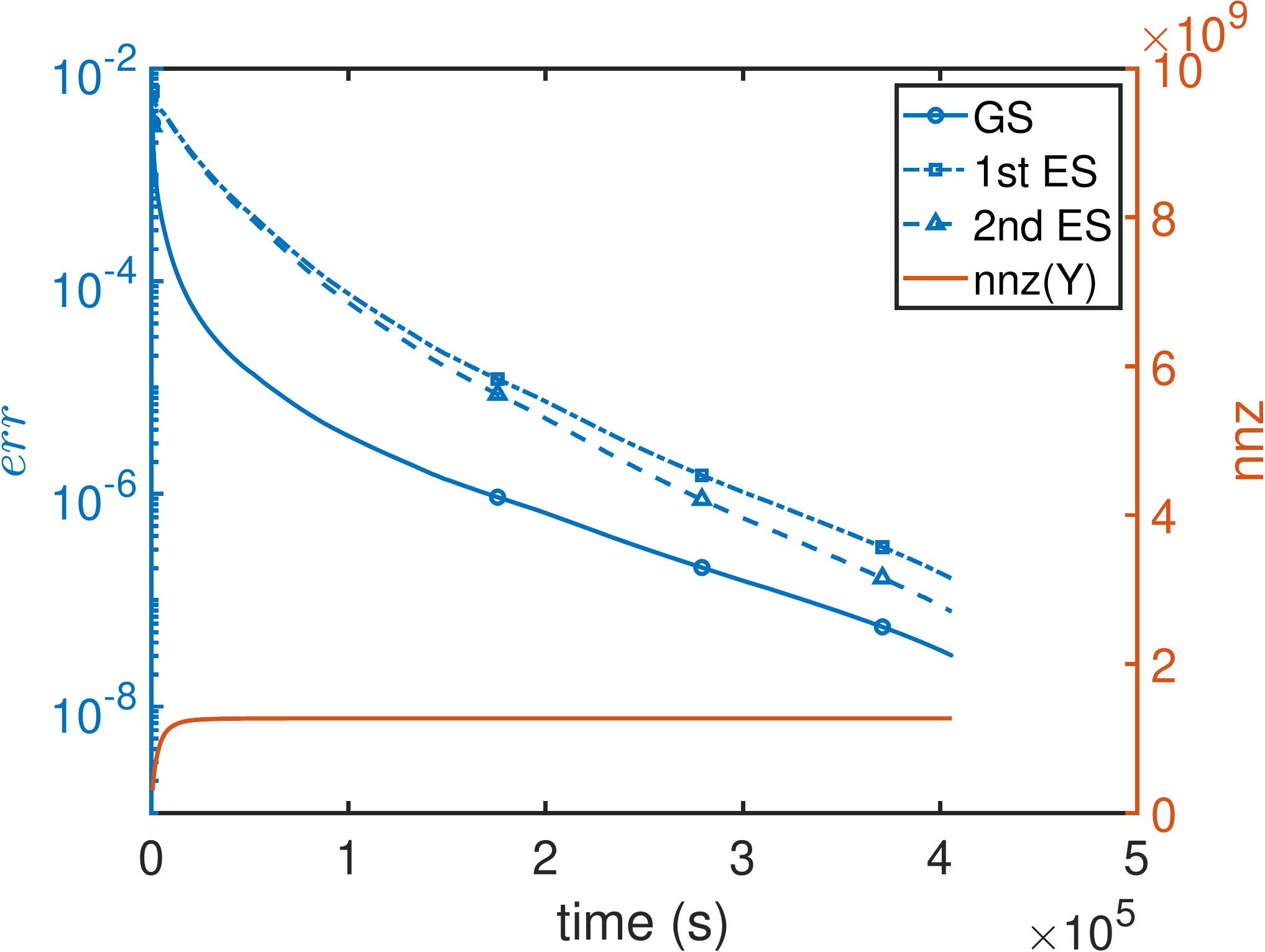}\label{fig:ConvergenceLargeMatrix:b}}
    \caption{Convergence of the smallest eigenvalues and number of nonzero elements (nnz) of the corresponding eigenvectors against runtime for $p=3$.}
    \label{fig:ConvergenceLargeMatrix}
\end{figure}

From Table~\ref{tab:LargeMatrix}, we can see that storing the dense
matrices $X$ (or $AX$) in double type needs at least 10 GB memory for
``h2o\_ccpvdz'' and 400 GB memory for ``c2\_ccpvdz''. This results in the
memory bottleneck of classical eigensolvers due to the unavoidable
gradient calculation or orthogonalization. The nnz$(Y)$ in
Table~\ref{tab:ConvergenceLargeMatrix} tells us that under the $10^{-4}$
accuracy, the cost of memory by \WTPM{}-CD decreases to 0.4 GB for
``h2o\_ccpvdz'' and 10 GB for ``c2\_ccpvdz'', which improves our
capability of solving larger FCI matrices generated by more complicated
particle systems. Figure~\ref{fig:ConvergenceLargeMatrix} shows the
details of the convergence, from which we can see the nnz$(Y)$ increases
quickly at the beginning and is fixed at a small number in comparison with
the dimension. Besides, the curve of ``2nd ES'' in
Figure~\ref{fig:ConvergenceLargeMatrix:a} has a different tendency from
the others. That is because the energy of the 2nd excited state in
``h2o\_ccpvdz'' is monotonically increasing during the iteration, and it
converges to a value larger than the benchmark. It indicates that if we
set a compression threshold, the converged result may lose some accuracy.

\section{Conclusion}
\label{sec:conclusion}

In this paper, we propose an eigensolver \WTPM{}-CD, which is an efficient
algorithm for FCI eigenvalue problems of quantum many-body systems. We
first propose a novel unconstrained minimization objective, namely
\WTPM{}, for Hermitian eigenvalue problems. The theoretical analysis of
the minimization model tells us that the global minimizers of \WTPM{} are
exactly the eigenvectors we expect instead of the invariant subspace,
so the orthogonalization process required by other methods is not needed
in \WTPM. Moreover, we calculate the exact condition number of the
Hessian operator, and use it to give a near-optimal weight matrix $W$. For
the algorithm framework, the coordinate descent method with compression
threshold reduces the number of nonzeros and the cost of storage. In
numerical experiments, compared with LOBPCG and EigPen-B solvers,
\WTPM{}-CD shows a better computational complexity on small systems. On
large-scale systems, \WTPM{}-CD guarantees its efficiency while the other
two solvers suffer from the bottleneck of memory.

There still exist some interesting works to be explored in the future.
First, we do not give theoretical proof on the convergence of the
\WTPM{}-CD algorithm. The performance of \WTPM{}-CD converging varies
while the picking rule changes. Some in-depth research on the convergence
theory is necessary to explain such a phenomenon. And some cheaper picking
rules, such as stochastic search, may be introduced to the coordinate
descent algorithm. The parallelization also attracts us to dive into it,
since at present we can only update one element in each iteration, and the
utilization efficiency of multicore is at a low level. We believe it is
possible to modify the coordinate descent method to update elements in a
batch and increase the parallel efficiency on shared memory systems.

\bibliographystyle{siamplain}
\bibliography{OEP}
\end{document}